\documentclass[15pt]{amsart}

\usepackage[centertags]{amsmath}
\usepackage{amsfonts}
\usepackage{amsthm}
\usepackage{newlfont}
\usepackage{amscd}
\usepackage{amsgen}
\usepackage{amssymb}
\usepackage{longtable}
\usepackage[centertags]{amsmath}
\usepackage{amsfonts}
\usepackage{amsthm}
\usepackage{newlfont}
\usepackage{amscd}
\usepackage{amsgen}
\usepackage{amssymb}

\newlength{\defbaselineskip} \setlength{\defbaselineskip}{\baselineskip}


 \theoremstyle{plain} \newtheorem{thm}{Theorem}[section]
 \newtheorem{lemm}[thm]{Lemma}
\newtheorem{prop}[thm]{Proposition}
\theoremstyle{definition}

\newtheorem{cor}[thm]{Corrolary}
\newtheorem{rem}[thm]{Remark}

 \numberwithin{equation}{section}
\numberwithin{equation}{section} \theoremstyle{definition}
\newtheorem{ex}{Example}[section]

\DeclareMathOperator{\sing}{sing}

\begin{document}
\title{Projections of del Pezzo surfaces and Calabi--Yau
threefolds}
\author{Grzegorz Kapustka}

\thanks{
Mathematics Subject Classification(2000): 14J10, 14J15, 14J30,
14J32.\\
Partially supported by MNiSW grant N N201 388834. The author is
supported by the Foundation for Polish Science (FNP)} \maketitle

 \begin{abstract} We study the syzygetic structure of projections of del Pezzo surfaces in order to
 construct singular Calabi--Yau threefolds. By smoothing those threefolds, we obtain
 new examples of Calabi--Yau threefolds with Picard
 group of rank $1$. We also give an example of type II primitive
contraction whose exceptional divisor is the blow-up of the
projective plane at a point.
 \end{abstract}
\maketitle
\section{Introduction}
A Calabi--Yau threefold is an irreducible projective threefold with
Gorenstein singularities, trivial canonical class such that the
intermediate cohomologies of its structure sheaf are all trivial
($h^i(X,\mathcal{O}_X) = 0$ for $0 < i < \dim(X)$). The aim of this
work is to extend and complement the results obtained in \cite{K}
and \cite{KK}. Considering projections of del Pezzo surfaces, we
find new examples of smooth Calabi--Yau threefolds with Picard group
of rank $1$. Note that these threefolds of low degree are not
complete intersection in toric varieties (see \cite{BB}) and are
interesting from the point of view of the Picard--Fuchs equations
and mirror symmetry (see \cite{BCKS}, \cite{CC}, \cite{CD}, \cite{COGP}, \cite{ED} and Remark \ref{re}). In
particular it is an open problem to find their mirror families (or at leat a candidate for the Picard-Fuchs equation of
the mirror). Recall that it is not known whether there are a finite
number of families of Calabi--Yau threefolds; however there are evidences in \cite{ED} that there is a finite number of examples with Picard group of rank $1$. It is conjectured that all the Calabi--Yau threefold should be obtained by geometric transitions starting from Calabi--Yau threefolds with Picard group of
rank $1$ (or primitive Calabi--Yau threefolds) see \cite{Re}, \cite{G}, and \cite{Ro}. Recall also that different constructions of families of Calabi--Yau threefolds where considered in \cite{BK}, \cite{CD}, \cite{CS}, \cite{D}, \cite{Lee1}, \cite{T} see the table in Appendix \ref{A1}.

Let us describe our method; the first step is to construct Calabi-Yau threefolds with Picard
group of rank $2$ containing a del Pezzo surface $D'$. Such a
Calabi--Yau threefold is obtained as a small resolution of a nodal
Calabi--Yau threefold $X'$ containing the projection $\tilde{D}$ of
a del Pezzo surface in its anticanonical embedding. The del Pezzo
surface $D'$ can then be contracted and the resulting threefold
smoothed (by \cite{G}). We denote the Calabi--Yau threefolds
obtained by $\mathcal{Y}_t$. In comparison with the construction
using del Pezzo surfaces in their anticanonical embedding, in this
context new technical problems appear. In particular, we cannot use
Theorem 2.1 from \cite{K} to find a nodal Calabi--Yau threefold
containing the del Pezzo surface. Indeed, the base locus of hypersurfaces of
minimal degree from the ideal of the projected surface contains not
only the surface $\tilde{D}$ but also its maximal multisecant lines
(see Lemma \ref{es}). Instead, we prove Lemma \ref{lem
nodal} that can be useful in a more general context (cf.~\cite{DH}).
Throughout the paper we deal with the ideal of the projected del
Pezzo surfaces using \cite{AK,KP,P}. It turns out that the generators of the
ideals of the del Pezzo surfaces considered are closely related to the number of multisecant lines to those surfaces (analogous
relations for projections of rational normal curves were observed in
\cite{P}; see also \cite{T}) and can be studied using the geometry
of the natural nodal Calabi--Yau threefold $X'$. The number of
multisecants is computed using results of Le Barz \cite{LB,LB2}.

From the results of \cite{KK} we compute the Hodge numbers of
$\mathcal{Y}_t$ (a generic element of $\mathcal{Y}$). We also find
other important invariants of the threefolds obtained: the degree
of the second Chern class, i.e.~$c_2\cdot H$, and the degree of
the generator of the Picard group, i.e.~$H^3$, where $H$ is the
generator of the Picard group of $\mathcal{Y}_t$. The results are
presented in Table \ref{table1}; in each case we have
$h^{1,1}(\mathcal{Y}_t)=1$. We present a complete list of examples that can be obtained with this method i.e.~by projecting the del Pezzo surface into
a projective space (it is interesting to study such embedding into Grassmanians an weighted projective spaces).

\begin{center}
            \renewcommand*{\arraystretch}{0.7}
\begin{table}[h]\label{table1}
\caption{}
\begin{tabular}{|c|c|c|c|c|c|c|}

\hline

          No & $\deg D'$&$X'$&$\sing X'$&$\chi(\mathcal{Y}_t)$&$H^3$&   $h^0(H) $                   \\ \hline
                                                                         \hline

           1& $6$&$X'_{2,4} $                           &$42$ ODP&$-96$   &$14$&$7$                              \\ \hline
2&$6$&$X'_{2,4} $ &$42$ ODP&$-98$ &$14$&$7$ \\ \hline
  3& $6$&$X'_{3,3}$                           &$36$ ODP&$-76$  &$15$&$7$                          \\ \hline
         4&   $6$&$X'_{3,3} $                           &$36$ ODP&$-78$  &$15$&$7$                          \\ \hline
5&$7$&$X'_{3,3} $                               &$44$ ODP&$-60$ &
$16$&$7$                                 \\ \hline
 6&   $8$&$X'_{3,3} $                           &$52$ ODP&$-44$  &$17$&$7$                          \\ \hline

  7&$7$&$X'_{2{,}2{,}3} $ &$37$ ODP&$-74$ &$19$&$8$                             \\ \hline

8&$8$&$X'_{2{,}2{,}3}$&$44 $ODP&$-60$&$20$&  $8$ \\ \hline

 9&           $8$&$X'_{2{,}2,2,2} $                               &$42$ ODP&$-50$  & $24$&$9$ \\ \hline
  10&          $8$&$X'_{2{,}2{,}2,2} $                               &$36$ ODP& --  & --&    --                              \\ \hline

  \end{tabular}
 \end{table}
            \end{center}
The Calabi--Yau threefolds labeled 7, 8, 9 are new. Note that Nos.~2, 4, 5, and 6 have the same numerical invariants as the examples
constructed by Tonoli \cite{T}. Nr.~2, and Nr.~4 are discussed in Example \ref{24}.  Nos.~5 and 6 are discussed in Proposition \ref{k0}, Nos.~7 and 8 are studied in Theorem \ref{kl}, and No.~9 in Theorem \ref{asd}. Note that No.~1 and No.~3  do not have smoothing in $\mathbb{P}^6$ and
the generator of their Picard group is not very ample.

The last construction from Table \ref{table1} gives an example of
a type II primitive contraction whose exceptional divisor is the
blow-up of the projective plane at a point and completes the
classification of primitive contractions of type II (see
\cite[Prob.~2.1]{K}). In this case the image of the contraction is
not smoothable.

It is natural to apply our construction in more general situations.
First, when the complete intersection $X$ containing the del Pezzo surface $D$ have only terminal singularities (not necessarily nodal).
Then we can perform after Kawamata the symbolic blow-up of $X$ by $-D$ obtaining a small resolution $Y\to X$ such that the strict transform of $D$
is isomorphic to $D$ and is a Cartier divisor on $Y$. In this case however we do not know how to find the Picard group of $Y$; it is then probably greater then $2$. Next we can apply the construction in order to construct higher dimensional manifolds. For example to construct
$4$-dimensional examples we have to find a complete intersection $X$ containing a given Fano threefold $F$. The difficulty here is that the singularities of $X$ are no more isolated and it is delicate to perform a flop of the blow-up of $F\subset X$. Note here that the cone over $F$ have a smoothing
when $F$ is a hyperplane section of a Fano $4$-fold of index $2$; those are good testing examples.

The methods developed in this paper permit us also to avoid the
computer calculations of \cite{K}. The new ingredient here is the
use of the results about the restriction of syzygies from
\cite{EGHP}, and the theory of linkages (see \cite{PS,MN}), to
compute the dimension of the linear system giving the contraction
and the degree of the image. The use of the computer algebra system
Singular \cite{GPS} is, however, needed to compute Gr\"{o}bner bases
in the larger context studied in this paper (useful Singular scrips are available at the end of the paper). To prove several
statements, we find using Singular an example where the statements
hold, and then use semi-continuity arguments.

\section*{Acknowledgements} The author would like to thank Micha\l\, Kapustka
for many discussions. He would also like to thank P.~Le Barz,
G.~Bini, S.~Cynk, B.v.~Geemen, Ch.~Okonek, V.~Przyjalkowski, J.~Wi\'{s}niewski
for answering questions and motivations. He thank the referee for helpful remarks. Part of this work was done
during his stay at the University of Z\"{u}rich.
\section{Projections of del Pezzo surfaces}
  We are interested in minimal resolutions of the ideals of generic linear
projections of del Pezzo surfaces. Denote by $\tilde{D}\subset
\mathbb{P}^{N-1}$ the projection of $D\subset \mathbb{P}^N$ from a
generic point of $\mathbb{P}^N$ (in particular
$\tilde{\tilde{D}}\subset \mathbb{P}^{N-2}$ is a projection from a
generic line). Denote by $D_i\subset \mathbb{P}^i$ the del Pezzo
surface of degree $i$ for $i\leq 7$, by $D_8$ the double Veronese
embedding of a quadric and by $\mathbb{F}_1$ the blow-up of
$\mathbb{P}^2$ in one point (embedded by the anticanonical system).
\begin{prop}\label{po}\begin{enumerate}\item The ideal of $\tilde{\mathbb{F}}_1$ in the homogeneous coordinate
ring of $\mathbb{P}^7$ is generated by $11$ quadrics and one cubic.
\item The ideal of $\tilde{D_8}\subset \mathbb{P}^7$ is generated by
$11$ quadrics.
\item The ideal of $\tilde{D_7}\subset \mathbb{P}^6$ is generated
by $6$ quadrics and $3$ cubic.
\item The ideal of
$\tilde{D_6}\subset \mathbb{P}^5$ is generated by $2$ quadrics and
$7$ cubics.
\end{enumerate}
\end{prop}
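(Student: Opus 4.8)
The plan is to reduce everything to the Hilbert function of the projected surface together with the number of linear syzygies among its quadratic generators, treating the three cases uniformly by the degree $d$ of the del Pezzo surface ($d=8$ for both $\tilde{\mathbb{F}}_1$ and $\tilde{D_8}$, and $d=7$ for $\tilde{D_7}$). Since the centre of projection is a general point off $D\subset\mathbb{P}^{d}$, the projection $\tilde{D}\subset\mathbb{P}^{d-1}$ is isomorphic to $D$, it is nondegenerate, and its hyperplane bundle is $-K_D$; choosing coordinates so that the centre is $p=[0:\cdots:0:1]$, one has $I_{\tilde{D}}=I_D\cap k[x_0,\dots,x_{d-1}]$. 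A form of degree $m$ in $I_{\tilde{D}}$ is exactly a cone with vertex $p$ containing $D$, so $\dim(I_{\tilde{D}})_m=\binom{d-1+m}{m}-\operatorname{rk}\bigl(\operatorname{Sym}^m V\to H^0(-mK)\bigr)$, where $V=\langle x_0,\dots,x_{d-1}\rangle\subset H^0(-K)$ is the general hyperplane of sections. Using that $D$ is projectively normal with $h^0(-mK)=1+\tfrac{m(m+1)}{2}d$, the only input I need is the rank of these multiplication maps.

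First I would establish that $\operatorname{Sym}^m V\to H^0(-mK)$ is surjective for $m\ge 2$ (higher normality of the projection). A dimension count shows this is numerically possible for $d\ge 6$, and for a general centre the map has maximal rank; I would confirm the $m=2$ case directly, by checking that the polars $\partial q/\partial x_d$ of the quadrics $q\in(I_D)_2$ span all linear forms, and otherwise exhibit one centre in Singular \cite{GPS} and invoke upper-semicontinuity. Granting surjectivity, the quadric count is immediate: $\dim(I_{\tilde{D}})_2=\binom{d+1}{2}-(3d+1)$, giving $11$ for $d=8$ and $6$ for $d=7$, and since $(I_{\tilde{D}})_1=0$ these are minimal quadric generators.

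The crux is the number of cubic generators, which is $\dim(I_{\tilde{D}})_3-\dim\bigl(V\cdot(I_{\tilde{D}})_2\bigr)$. By $3$-normality, $\dim(I_{\tilde{D}})_3=\binom{d+2}{3}-(6d+1)$, which equals $71$ for $d=8$ and $41$ for $d=7$; meanwhile $\dim\bigl(V\cdot(I_{\tilde{D}})_2\bigr)=d\cdot(\#\text{quadrics})-s$, where $s$ is the number of linear (first) syzygies among the quadric generators. Thus the whole statement comes down to computing $s$ (geometrically, $s$ and the cubic generators are governed by the multisecant lines of $\tilde{D}$). This is the hard part, and it is precisely here that $\tilde{\mathbb{F}}_1$ and $\tilde{D_8}$ diverge: the counts force $s=16$ for $\tilde{\mathbb{F}}_1$ and $s=17$ for $\tilde{D_8}$ (one cubic versus none), and $s=4$ for $\tilde{D_7}$ (three cubics). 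I would compute the linear strand of $\tilde{D}$ from the known minimal resolution of the ACM surface $D$ via the restriction-of-syzygies results of \cite{EGHP}, tracking how the linear syzygies of $D$ survive under projection from $p$; where a clean transfer is unavailable I would read $s$ off one explicit surface-and-centre in Singular and promote it to the general centre by upper-semicontinuity of the graded Betti numbers against the centre-independent lower bound coming from the Hilbert function above. Liaison \cite{PS,MN} provides an alternative route to the resolution when $\tilde{D}$ is linked to a simpler scheme by its quadrics.

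Finally I would rule out generators in degree $\ge 4$ by bounding $\operatorname{reg}(\tilde{D})\le 3$. Each del Pezzo $D$ of degree $\ge 4$ is $2$-regular (its anticanonical resolution is $2$-linear up to the top term), and a general one-point projection raises the Castelnuovo--Mumford regularity by at most one, so $I_{\tilde{D}}$ is generated in degrees $\le 3$; the same conclusion follows from the explicit Betti table of the Singular example by semicontinuity. Combining the regularity bound with the quadric and cubic counts yields the three assertions. I expect the linear-syzygy count $s$, and in particular the mechanism separating the two degree-$8$ surfaces, to be the main obstacle, the normality and regularity inputs being comparatively routine.
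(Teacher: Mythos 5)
Your bookkeeping framework is sound as far as it goes, and its skeleton matches the paper's actual strategy: the paper also gets the quadric count and the constancy of the Hilbert function from \cite{AK}, generation in degrees $\leq 3$ from \cite[Thm.~1.2]{KP} (rather than your heuristic claim that a general one-point projection raises regularity by at most one, which is not a theorem in that generality --- though your fallback via the example's Betti table is fine), and the cubic count from a Singular example promoted by semicontinuity of graded Betti numbers \cite[Prop.~2.15]{BG}. Two local errors first: your identity gives $\#\{\text{cubic generators}\}=\dim(I)_3-(dq-s)$, i.e.\ $s-17$ for $d=8$, $q=11$ and $s-1$ for $d=7$, $q=6$; so the correct values are $s=18$ for $\tilde{\mathbb{F}}_1$ (one cubic) and $s=17$ for $\tilde{D_8}$ (none), not your $16$ and $17$ --- your numbers run in the wrong direction (more cubic generators force \emph{more} linear syzygies) and $s=16$ would yield $-1$ cubics.

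The genuine gap is the lower bound. Semicontinuity from one computed example only shows the \emph{generic} Betti numbers are at most the computed ones; for $\tilde{D_8}$, where the example gives zero cubics, this suffices, but for $\tilde{\mathbb{F}}_1$ and $\tilde{D_7}$ you must rule out that the generic centre needs \emph{fewer} cubics than your example. Your proposed ``centre-independent lower bound coming from the Hilbert function'' is vacuous here: the Hilbert function (given $2$- and $3$-normality) determines only the difference $\#\{\text{cubics}\}-s$ (namely $-17$, resp.\ $-1$), so the minimal Betti numbers consistent with it are $0$ cubics with $s=17$ (resp.\ $s=1$), and nothing in your argument excludes this generic degeneration. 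The paper closes exactly this hole geometrically: $\tilde{\mathbb{F}}_1$ has a trisecant line (the image of the trisecant plane to $\mathbb{F}_1$ through the centre, which exists and is unique by \cite{CR}; note $D_8$ has no trisecants by $2$-regularity and \cite[Thm.~1.1]{EGHP}), and since every quadric through the surface contains any trisecant line, the ideal cannot be quadrically generated, forcing at least one cubic --- this is also the mechanism that separates the two degree-$8$ surfaces, which your proposal leaves to an unspecified ``tracking'' of syzygies under projection. For $\tilde{D_7}$ the paper shows the $3$-dimensional family of $\mathbb{P}^4$'s spanned by the rational normal quartics in $|2H-E_1-E_2|$ covers $\mathbb{P}^7$, so one passes through the centre; the projected quartic is a $(1,3)$ curve on a smooth quadric in $\mathbb{P}^3$, which needs $3$ cubic generators, giving the lower bound $3$. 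Without these (or equivalent) geometric inputs your proof establishes only upper bounds in cases (1) and (3).
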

\begin{proof} It is well known (see \cite{Hoa}) that the minimal free
resolutions of the del Pezzo surfaces $D_6\subset \mathbb{P}^6$,
$D_7\subset \mathbb{P}^7$, and $D_8\subset \mathbb{P}^8$ are the
following:
$$ \mathcal{O}^9(-2)\leftarrow \mathcal{O}^{16}(-3)\leftarrow \mathcal{O}^9(-4)\leftarrow \mathcal{O}(-6)\leftarrow 0 $$
$$\mathcal{O}^{14}(-2)\leftarrow \mathcal{O}^{35}(-3)\leftarrow \mathcal{O}^{35}(-4)\leftarrow \mathcal{O}^{14}(-5)
\leftarrow \mathcal{O}(-7)  \leftarrow  0 $$
$$\mathcal{O}^{20}(-2)\leftarrow
\mathcal{O}^{64}(-3)\leftarrow \mathcal{O}^{90}(-4)\leftarrow
\mathcal{O}^{64}(-5)\leftarrow \mathcal{O}^{20}(-6)\hookleftarrow
\mathcal{O}(-8)$$
 respectively. Moreover, $\mathbb{F}_1$ has the free resolution of the same shape as $D_8$. In particular these surfaces satisfy property $N_{2,3}$ thus
it follows from \cite[Thm.~3.1 (b)]{AK} that the projected surfaces have
ideals generated by cubics. Now the number of quadrics in the ideals is
computed using \cite[Prop.~4.1 (a)]{AK} and found to be equal to $h^0(\mathcal{I}_{\tilde{D_d}(2)})=h^0(\mathcal{I}_{D_d}(2))-d$. So the problem is to find the number of
cubics or equivalently to find the number of all generators (this is the $0$-th Betti number $b_0(\mathcal{I}_{\tilde{D}}))$.

The idea is to compute this number for a concrete example
and use the Zariski semicontinuity of the graded Betti numbers on
the open set where the Hilbert function is maximal (proven in
\cite[Prop.~2.15]{BG}). First, from \cite[Rem.~4.2]{AK} we find that $h^0(\mathcal{I}_{\tilde{D_d}}(k))=h^0(\mathcal{I}_{D_d}(k))-$ ${d+k-1}\choose {d}$.
It follows that the Hilbert $H(n)=h^0(\mathcal{O}_{\mathbb{P}^{d-1}}(n))-h^0(\mathcal{I}_{\tilde{D}}(n))$ does not depend of the center of projection and that
the Hilbert function is constant for isomorphic projections.
We shall prove this proposition in the easiest and the harder case.

Let us consider the
example above in the case of $D_8\subset \mathbb{P}^8$ with
coordinates ($x,y,z,t,u,v,w,s,m$).
 Such a surface is given by the
$2\times 2$ minors of a symmetric $4\times 4$ matrix whose entries
are linear forms of the nine coordinates. Let us project from the
point $(1,0,\dotsc,0)$ the surface given by the matrix
$$\left( \begin{array}{c}
\begin{array}{cccc}
x&y+x&z&t\\
 y+x&u&v&w\\
 z&v&s&m-x\\
 t&w&m-x&s
\end{array}
\end{array}\right).$$
We can calculate by hand (or with Singular) by eliminating the
variable $x$ that the projected surface needs only quadric
generators. Since by the discussion before the number of generators can be only smaller for special values we are done (because the number of quadrics is constant and equal to $h^0(\mathcal{I}_{\tilde{D_8}}(2))$).

Let us consider now the most difficult case $3$. To argue as before we need to find a
lower bound of the number of cubics. Consider the system $|2H-E_1-E_2|$ on $D_7$, where $E_1$, $E_2$
are exceptional divisors and $H$ the pull-back of the hyperplane
section from $\mathbb{P}^2$, is a system of rational normal curves
of degree $4$. This system defines a $3$-dimensional family of
four dimensional projective spaces in$ \mathbb{P}^7$ spanned by the rational curves. Since the trisecant
planes to $D_7$ cover all $\mathbb{P}^7$ we can find a rational
quartic curve passing through $3$ generic points of $D_7$ thus the above $\mathbb{P}^4$'s covers the ambient space $\mathbb{P}^7$. So we
can find one considered $\mathbb{P}^4$ passing through the center of
projection. We are interested what is happening after projecting this $\mathbb{P}^4$.
Let us remind that the projected rational
normal curve of degree $4$ in $\mathbb{P}^3$ is a divisor of
bi-degree $(1, 3)$ in a smooth quadric $\mathbb{P}^1 \times
\mathbb{P}^1$ thus needs $3$ cubic generators. This gives the required lower bound.\end{proof}
By computing the Chern classes of the conormal bundle of $\tilde{D_8}\subset \mathbb{P}^7$ and using \cite[Thm.~2.1]{K} we obtain the following:
\begin{cor} The complete intersection of four quadrics containing $\tilde{D_8}$ is a nodal Calabi--Yau threefold $X$ with $36$ ODP.
\end{cor}
Since the syzygies between the quadric generator are not all linear we do not obtain directly that the Picard group of the Calabi--Yau threefold obtained by blowing up $X$ along $\tilde{D_8}$ is $2$. To prove this we need to analyze the exceptional set of the map defined by the quadric containing $\tilde{D_8}$ this will be treated in Theorem \ref{asd}.
\section{degree $15$ Calabi--Yau threefolds}\label{poi}
The methods and remarks from this section can also be applied to
Calabi--Yau threefolds considered in \cite{K}.
 Let $D_6\subset \mathbb{P}^6$ be an anticanonically embedded del Pezzo surface of
degree $6$. Denote by $\tilde{D}$ the projection of $D_6$ into
$\mathbb{P}^5$ from a generic point $Q$ in $\mathbb{P}^6$. It
follows from Proposition \ref{po} that the ideal of $\tilde{D}\subset
\mathbb{P}^5$ is generated by cubics. From \cite[Thm.~2.1]{K}, we
infer that the generic complete intersection $X'\subset
\mathbb{P}^5$ of two cubics containing $\tilde{D}$ is a nodal
Calabi--Yau threefold. Using Chern classes we compute the $36$ nodes
on $X'$. Denote by $S'$ the surface linked via a general cubic to
$\tilde{D}$ on $X'$, i.e. $S'\in|3H-\tilde{D}|$ where $H$ is the
hyperplane section of $X'\subset \mathbb{P}^5$.
\begin{lemm}\label{lem1} The surface $S'\subset \mathbb{P}^5$ is smooth and is contained in a
quartic that does not contain $\tilde{D}$.
\end{lemm}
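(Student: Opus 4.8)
The plan is to treat the two assertions separately, deriving the existence of the quartic from liaison and the smoothness of $S'$ from a local analysis at the relevant loci together with a Bertini/semicontinuity argument. Throughout I use that the generic projection $\tilde{D}$ is an isomorphic image of $D_6$ (a general point of $\mathbb{P}^6$ lies off the secant and tangent varieties of $D_6$, both of dimension $\le 5$), so $\tilde{D}$ is a smooth del Pezzo surface with $\omega_{\tilde{D}}=\mathcal{O}_{\tilde{D}}(-1)$. The central structural observation is that $Z:=\tilde{D}\cup S'=X'\cap G$ is the complete intersection of the two cubics defining $X'$ and the general cubic $G$, i.e.\ a $(3,3,3)$ complete intersection surface; in particular $Z$ is arithmetically Cohen--Macaulay and $\tilde{D}$, $S'$ are geometrically linked in $Z$.

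For the quartic I would run the liaison exact sequence. Since $\omega_Z=\mathcal{O}_Z(3)$, geometric linkage gives $\mathcal{I}_{S'/Z}\cong\omega_{\tilde D}\otimes\omega_Z^{\vee}\cong\mathcal{O}_{\tilde D}(-4)$, a sheaf supported on $\tilde{D}$. Twisting the sequence
$$0\to\mathcal{I}_{Z}\to\mathcal{I}_{S'}\to\mathcal{I}_{S'/Z}\to 0$$
by $\mathcal{O}(4)$ and taking cohomology yields
$$0\to H^0(\mathcal{I}_{Z}(4))\to H^0(\mathcal{I}_{S'}(4))\to H^0(\mathcal{O}_{\tilde D})\to H^1(\mathcal{I}_{Z}(4)).$$
Here $h^0(\mathcal{O}_{\tilde D})=1$, and $H^1(\mathcal{I}_Z(4))=0$ because $Z$ is a complete intersection. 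Since $\mathcal{I}_Z$ is generated by three cubics whose first syzygies are Koszul (hence of degree $6$), one has $h^0(\mathcal{I}_Z(4))=3\cdot h^0(\mathcal{O}_{\mathbb{P}^5}(1))=18$. Thus $h^0(\mathcal{I}_{S'}(4))=19$, and the extra section maps to a nowhere-vanishing function on $\tilde{D}$; the corresponding quartic contains $S'$ but restricts to a nonzero form on $\tilde{D}$, so it does not contain $\tilde{D}$. This settles the second claim.

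For smoothness I would localize $\operatorname{Sing}(S')$. Away from $\tilde{D}$ and from $\operatorname{Sing}(X')$, $S'$ is a general divisor in $|3H-\tilde{D}|$ on the smooth locus of $X'$, so by Bertini it is smooth once one checks that the cubics through $\tilde{D}$ cut out $\tilde{D}$ off a finite locus of $X'$; this I would read off from the resolution of $\mathcal{I}_{\tilde D}$ recorded after Proposition \ref{po} and from Proposition \ref{prop ost}, confirming that the residual base locus consists at most of multisecant lines that do not destroy smoothness. The interesting points are the $36$ nodes. Since $\tilde{D}$ is smooth and each node lies on $\tilde{D}$, at such a point $\tilde{D}$ must be one of the two smooth ruling sheets of the local ordinary double point $\{xy=zw\}$ (a smooth Weil divisor through the vertex has local class $\pm1$). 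As $3H$ is Cartier, $S'\sim 3H-\tilde{D}$ has the opposite ruling class locally, and being the reduced residual it is the complementary ruling sheet, hence smooth at the node.

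The hard part will be the behaviour along the linkage curve $C=\tilde{D}\cap S'$ at smooth points of $X'$, where $C$ is forced into the base locus of $|3H-\tilde{D}|$ and Bertini alone does not give smoothness. Writing $\tilde{D}=\{u=0\}$ and $G|_{X'}=u\cdot s$ locally on the smooth threefold $X'$, smoothness of $S'=\{s=0\}$ at $p\in C$ amounts to $ds(p)\neq 0$, and one must show this for the general $G\in H^0(\mathcal{I}_{\tilde D}(3))$. This is a Zariski-open condition on $G$, so it suffices to exhibit a single datum $(\tilde{D},F_1,F_2,G)$ for which $S'$ is smooth and to invoke semicontinuity of the singular locus, as in \cite[Prop.~2.15]{BG}; I would produce such an example over a finite field with \texttt{Singular} \cite{GPS}, verify that the Jacobian ideal of $S'$ is irrelevant, and conclude that the general $S'$ is smooth. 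Combining the three cases yields smoothness of $S'$.
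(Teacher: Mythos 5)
Your treatment of the quartic is, up to packaging, the paper's own proof: the paper uses the same liaison sequence in the form $0\to\omega_{\tilde{D}}(1)\to\mathcal{O}_{S'\cup\tilde{D}}(4)\to\mathcal{O}_{S'}(4)\to0$ and concludes $h^0(\mathcal{I}_{S'}(4))>h^0(\mathcal{I}_{S'\cup\tilde{D}}(4))$ from $\omega_{\tilde{D}}=\mathcal{O}_{\tilde{D}}(-1)$; your version via $0\to\mathcal{I}_Z\to\mathcal{I}_{S'}\to\mathcal{I}_{S'/Z}\to0$ with the explicit count $18\mapsto 19$ is the same computation, and it is correct since $Z$ is a $(3,3,3)$ complete intersection (ACM, ideal generated by the three cubics, only Koszul syzygies). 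For smoothness you genuinely diverge: the paper settles it in one line by quoting the generic linkage theorem \cite[Prop.~4.1]{PS}, which, because $\tilde{D}$ is smooth and $\mathcal{I}_{\tilde{D}}$ is generated by cubics (so $\mathcal{I}_{\tilde{D}}(3)$ is globally generated), yields smoothness of the generic link $S'$ uniformly --- at the nodes, along $C=S'\cap\tilde{D}$, and elsewhere; the same citation is what the paper later uses for smoothness of $C$ itself. What the citation buys is precisely the stratum you single out as ``the hard part''; what your stratified argument buys is self-containedness at the generic points and at the ordinary double points, at the cost of outsourcing the stratum along $C$ to an openness-plus-computer-example argument.

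Three points in your smoothness half need repair, though none is fatal. First, your premise that $C$ is ``forced into the base locus'' of $|3H-\tilde{D}|$ is false: by adjunction on the Calabi--Yau $X'$ one has $\tilde{D}|_{\tilde{D}}=\omega_{\tilde{D}}=\mathcal{O}_{\tilde{D}}(-1)$ away from the nodes, so $(3H-\tilde{D})|_{\tilde{D}}=\mathcal{O}_{\tilde{D}}(4)$ and $C$ moves with $G$; moreover the base locus of the cubics through $\tilde{D}$ is exactly $\tilde{D}$ by \cite[Thm.~1.2]{KP}, and Proposition \ref{prop ost}, which you invoke, concerns the projections $K_P,L_P$ of $D_7,D_8$ into $\mathbb{P}^5$ and does not apply here --- for $\tilde{D}$ there are no residual multisecant lines to worry about. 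Second, at a node the inference ``reduced residual of the opposite local class, hence the complementary ruling sheet, hence smooth'' needs a genericity input: the cone over a curve of bidegree $(1,2)$ on the quadric is a reduced Weil divisor of local class $-1$ that is singular at the vertex, so you must add that the linear part of $G$ at the node is general, which does follow from global generation of $\mathcal{I}_{\tilde{D}}(3)$. Third, your fallback along $C$ is a plan rather than an executed verification, and \cite[Prop.~2.15]{BG} is a semicontinuity statement for Hilbert functions and Betti numbers, not for singular loci; the principle you actually need is just that smoothness of the residual surface is an open condition in the flat family of triples $(F_1,F_2,G)$. All three are repairable, so your route is sound in substance, but note that \cite[Prop.~4.1]{PS} renders the entire case analysis unnecessary.
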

\begin{proof} From \cite[Prop.~4.1]{PS} we deduce that $S'$ is
smooth. Next we prove that there is a quartic in the ideal of
$S'\subset \mathbb{P}^5$ that is not generated by the three cubics
defining $S'\cup \tilde{D}$. From the following standard liaison
exact sequence $$0\rightarrow \omega_{\tilde{D}}(1) \rightarrow
\mathcal{O}_{S'\cup \tilde{D}}(4)\rightarrow
\mathcal{O}_{S'}(4)\rightarrow 0$$ we infer
$h^0(\mathcal{I}_{S'}(4))>h^0(\mathcal{I}_{S'\cup \tilde{D}}(4))$
since $\omega_{\tilde{D}}=\mathcal{O}_{\tilde{D}}(-1)$.\end{proof}
 Let $G'$ be the smooth surface linked to $S'$ via a
general quartic on $X'\subset \mathbb{P}^5$. Denote by $X$ the
Calabi--Yau threefold obtained by flopping the exceptional curves of
the blowing-up of $X'$ along $\tilde{D}$. Let $D'$ and $G$ be the
strict transforms on $X$ of $\tilde{D}$ and $G'$ respectively.

\begin{prop}\label{prop deg15} The image of $X$ under the morphism $\varphi_{|G|}$ is a
threefold $Y\subset\mathbb{P}^6$ of degree $15$ with one singular
point $P$. Moreover, $X'\subset \mathbb{P}^5$ is the projection of
$Y$ from $P$.
\end{prop}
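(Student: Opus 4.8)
```latex
The plan is to understand the geometry of the linear system $|G|$ on the flopped
threefold $X$ and to identify its image directly. First I would compute the class
of $G$ in terms of the hyperplane class and the del Pezzo surface. Since $G'$ is
linked to $S'$ via a quartic and $S'$ is linked to $\tilde D$ via a cubic on
$X'$, on $X'$ we have $S'\sim 3H-\tilde D$ and $G'\sim 4H-S'\sim H+\tilde D$.
After blowing up $\tilde D$ and flopping the exceptional curves to obtain $X$,
the strict transform $G$ will have class $\varphi^{*}H+D'$ (for the pullback of
the hyperplane class and the strict transform $D'$), so that
$\varphi_{|G|}$ is governed by hypersurfaces of degree one vanishing on
$\tilde D$ together with the contribution from $D'$. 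The crucial numerical
input is $\dim|G|=h^{0}(X,\mathcal{O}_X(G))$, which by the table entry equals $7$,
placing the image in $\mathbb{P}^6$.

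Next I would compute the degree of the image. The self-intersection $G^3$ on $X$
gives $\deg Y$ once I know that $\varphi_{|G|}$ is birational onto its image. Using
the liaison relations and the projection formula, $G^{3}=(\varphi^{*}H+D')^{3}$
expands into intersection numbers $H^{3}$ (computed on $X'$ as the degree of the
complete intersection of two cubics, namely $18$ corrected by the nodes), together
with terms $H^{2}\cdot D'$, $H\cdot D'^{2}$, and $D'^{3}$, the last two of which I
would evaluate using that $D'$ is a del Pezzo surface of degree $6$ and that the
flop alters the normal bundle in a controlled way. The expected outcome is
$\deg Y=15$, matching the table. I would then argue that $\varphi_{|G|}$
contracts exactly one divisor — or rather a configuration producing a single
isolated singular point $P$ on $Y$ — by checking that the only curves on which
$G$ has degree zero are those flopped curves whose images collapse to $P$, while
$G$ separates points and tangents generically.

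The decisive structural claim is the final sentence: that $X'\subset\mathbb{P}^5$
is the projection of $Y\subset\mathbb{P}^6$ from the singular point $P$. To prove
this I would exhibit the center of projection explicitly. Since $G\sim\varphi^*H+D'$,
the sub-linear-system of $|G|$ consisting of divisors containing $D'$ restricts to
the hyperplane system $|H|$ pulled back from $X'$; geometrically, removing the
hyperplane class recovers the original embedding $X'\subset\mathbb{P}^5$ as the
projection of $Y$ from the point $P$ corresponding to the ``extra'' section in
$H^0(G)$ that vanishes along $D'$. Concretely, the seven sections of
$\mathcal{O}_X(G)$ split as six coming from $|H|\subset|\varphi^*H|$ and one
distinguished section, and projecting $Y$ away from the point $P$ dual to this
section eliminates precisely the coordinate that blew $\tilde D$ up, returning the
nodal complete intersection $X'$.

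The main obstacle I anticipate is the step linking the abstract linear-algebra of
sections to the concrete geometry of the projection, that is, showing that
$\varphi_{|G|}$ has image of dimension three with exactly one singular point and
that this point is the correct projection center. The flop changes the birational
model, so I must track carefully how the flopped curves map under $|G|$: curves
that were contracted by the small resolution of $X'$ must not be contracted by
$\varphi_{|G|}$ (else $Y$ would acquire more singularities than the single point
$P$), and conversely the images of the flopping curves must all coincide at $P$.
Verifying this requires computing $G\cdot C$ for the relevant flopping curves $C$
and checking that the singularity of $Y$ at $P$ is exactly the cone recovered by
projecting from $P$; I expect to need the explicit geometry of $D'$ as a del Pezzo
surface of degree $6$, perhaps via its description as a blow-up of $\mathbb{P}^2$
at three points, to pin down the local structure at $P$.
```
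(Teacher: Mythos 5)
Your outline shadows the paper's actual argument in its skeleton---the identification $G\sim H^*+D'$ via the double linkage, the count $h^0(\mathcal{O}_X(G))=7$, and especially your final reduction of the projection statement to the subsystem of $|G|$ with $D'$ as fixed component, which is verbatim the paper's concluding step---but three of your intermediate steps are either circular or wrong. First, you justify $h^0(\mathcal{O}_X(G))=7$ ``by the table entry'': the table is an output of this very proposition, so this is circular. The paper proves it by showing $G'\subset\mathbb{P}^5$ is linearly normal, which follows because $G'$ and $\tilde{D}$ are doubly linked, giving $H^1_*(\mathcal{I}_{\tilde{D}})=H^1_*(\mathcal{I}_{G'})(1)$ (cf.\ \cite[Cor.~5.11]{MN}). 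Your later ``$6+1$'' splitting of the sections could be made into a proof via the sequence $0\to\mathcal{O}_X(H^*)\to\mathcal{O}_X(G)\to\mathcal{O}_{D'}\to 0$, but that presupposes $G|_{D'}$ trivial and $D'\not\subset G$, and you never establish the latter: this is exactly what Lemma \ref{lem1} is for (a quartic through $S'$ not containing $\tilde{D}$ guarantees $D'$ and $G$ have no common component, whence $G|_{D'}=\mathcal{O}_{D'}$ forces $D'\cap G=\emptyset$, so $|G|$ is base-point-free and contracts $D'$ to the single point $P$). Your proposal ignores this lemma entirely, and without it neither base-point-freeness nor the contraction of $D'$ is justified.

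Second, your degree computation contains a definite numerical error: the complete intersection of two cubics in $\mathbb{P}^5$ has degree $3\cdot 3=9$, not $18$, and the nodes do not ``correct'' the degree at all; with your $18$ the expansion would yield $24$, not $15$. The correct computation is short: $G|_{D'}=\mathcal{O}_{D'}$ gives $D'|_{D'}=-H^*|_{D'}$, so $G^3=(H^*+D')^3=H^{*3}+H^{*2}\cdot D'=9+6=15$, using $H^*|_{D'}=-K_{D'}$ and $K_{D'}^2=6$. Third, your description of the contracted curves is internally contradictory and partly false: after the flop a flopped curve $\ell$ satisfies $G\cdot\ell=(H^*+D')\cdot\ell=1$, so these curves are \emph{not} contracted by $\varphi_{|G|}$ (they map to lines through $P$); the only $G$-trivial curves are those inside $D'$, and it is the divisor $D'$ alone that collapses to $P$. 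You first state (correctly) that the flopping curves must not be contracted and then assert that ``the only curves on which $G$ has degree zero are those flopped curves whose images collapse to $P$''---the second claim is wrong, and checking $G\cdot C$ on flopped curves cannot by itself deliver the ``exactly one singular point'' conclusion, which in the paper comes from $D'\cap G=\emptyset$ together with very ampleness of $|G|$ away from $D'$ (inherited from the structure $G\in|H^*+D'|$, cf.\ \cite{K}).
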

\begin{proof} First, $G\in|H^*+D'|$ where $H^*$ is the pull-back of
$H$ on $X$. So $|G|$ is very ample outside $D'$. From Lemma
\ref{lem1} we infer that the effective divisors $D'\subset X$ and
$G\subset X$ do not have common components. Since $G|_{D'}$ is
trivial we obtain $D'\cap G=\emptyset$. So $|G|$ is base-point-free
and contracts $D'$ to a point.

To see that $h^0(\mathcal{O}_X(G))=7$ we need to prove that
$G'\subset \mathbb{P}^5$ is linearly normal
(cf.~\cite[Lem.~2.1]{K}). This follows from the fact that $G$ and
$\tilde{D}$ are doubly linked so we have
$H^1(\mathcal{I}_{\tilde{D}}(k))=H^1(\mathcal{I}_{G'}(k+1))$ for
$k\in\mathbb{Z}$ (cf.~\cite[Cor.~5.11]{MN}).

Finally, the projection of $Y$ from $P$ can be seen as the image of
$X$ under the linear subsystem of $|D'+H^*|$ of dimension $6$ with
$D'$ being a fixed component, thus under $|H^*|$ .
\end{proof}
\begin{rem}\label{rem1} The threefold $Y$ is not normal at $P$. We need to take a multiple of $G$ to obtain a primitive
contraction (cf. \cite[Lem.~2.5]{KK}). However, it is possible that
that in some cases $Y$ can be smoothed by Calabi--Yau threefolds in
$\mathbb{P}^6$. Note that we know that the germ of the cone over a
projected del Pezzo surface of degree $6$ can be smoothed by taking
hyperplane sections of the cone over the projection of
$\mathbb{P}^1\times \mathbb{P}^1\times \mathbb{P}^1$.
\end{rem}
\begin{thm}\label{we} The morphism $\varphi_{|2G|}$ gives a primitive
contraction with image being a singular Calabi--Yau threefold that
is a degeneration two family of Calabi--Yau threefolds with
$h^{1,2}=39$ and $h^{1,2}=40$ of degree $15$. Moreover, the Picard
groups of the threefolds obtained are isomorphic to $\mathbb{Z}$.
\end{thm}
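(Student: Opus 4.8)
The plan is to proceed in four stages: realise $\varphi_{|2G|}$ as a type II primitive contraction, smooth its image by means of \cite{G}, compute the resulting Hodge numbers following \cite{KK}, and finally pin down the Picard group. First I would identify the contraction. By Proposition \ref{prop deg15} the system $|G|=|H^*+D'|$ is base-point-free, contracts $D'$ to the point $P$ and is very ample on $X\setminus D'$, with $\varphi_{|G|}(X)=Y$ of degree $15$ in $\mathbb{P}^6$; by Remark \ref{rem1} the image $Y$ fails to be normal at $P$. Since $G|_{D'}$ is trivial, $2G$ also restricts trivially to $D'$ and is positive elsewhere, so $\varphi_{|2G|}$ again contracts exactly $D'$ and nothing else, but now separates the tangent directions at $P$ that $|G|$ missed; hence its image $\bar Y$ is normal. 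As $D'$ is irreducible and exhausts the exceptional locus while $\rho(X)=2$ drops to $\rho(\bar Y)=1$, this is a primitive contraction, and it is of type II because the contracted divisor $D'$ is the del Pezzo surface of degree $6$; the unique singular point $P\in\bar Y$ is analytically the cone over the anticanonically embedded $D_6\subset\mathbb{P}^6$.

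Next I would smooth $\bar Y$. The cone over $D_6$ is smoothable --- as recorded in Remark \ref{rem1} it occurs as a hyperplane section of the cone over $\mathbb{P}^1\times\mathbb{P}^1\times\mathbb{P}^1$ --- and its versal deformation carries two distinct smoothing components, which is the source of the two families in the statement. Applying Gross's smoothing theorem \cite{G}, whose hypotheses I would verify through Schlessinger's criterion \cite{Schl} exactly as in Remark \ref{rem1} (after checking that the global obstruction in $H^2(\bar Y,T_{\bar Y})$ does not interfere), propagates each local component to a global smoothing of $\bar Y$ and produces two families $\mathcal{Y}^{(i)}_t$ of smooth Calabi--Yau threefolds. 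The ample generator $\bar G$, with $\bar G^3=15$ and $h^0(\bar G)=h^0(\mathcal{O}_X(G))=7$, deforms to a very ample class on each $\mathcal{Y}^{(i)}_t$, realising them as threefolds of degree $15$ in $\mathbb{P}^6$.

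For the Hodge numbers I would follow \cite{KK} and track Euler characteristics through the transition. Since $X$ is the small resolution of the $36$-nodal complete intersection $X'_{3,3}$, whose generic smoothing $\hat X$ has $\chi(\hat X)=-144$, and since a small resolution replaces each node by a $\mathbb{P}^1$ while a smoothing replaces it by a vanishing $S^3$, one gets
$$\chi(X)=\chi(\hat X)+2\cdot 36=-72.$$
Contracting $D'=D_6$, with $\chi(D_6)=6$, to a point gives $\chi(\bar Y)=\chi(X)-6+1=-77$, and smoothing along the two components, whose Milnor fibres $F$ satisfy $\chi(F)=2$ and $\chi(F)=0$ respectively, yields
$$\chi(\mathcal{Y}^{(i)}_t)=\chi(\bar Y)-1+\chi(F)\in\{-76,-78\}.$$
Together with $h^{1,1}=1$ (established below) this gives $h^{1,2}=1-\tfrac12\chi\in\{39,40\}$, matching Nos.~3 and 4 of Table \ref{table1}.

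Finally, for the Picard group the transition contracts exactly the divisor $D'$ and the smoothing introduces no new divisor class, so $h^{1,1}(\mathcal{Y}^{(i)}_t)=h^{1,1}(X)-1=1$, the surviving ample class $\bar G$ forcing the rank to be exactly $1$; as a smoothing of $\bar Y$, each $\mathcal{Y}^{(i)}_t$ is a simply connected Calabi--Yau threefold, so $H^2(\mathcal{Y}^{(i)}_t,\mathbb{Z})$ is torsion-free and $\Pic(\mathcal{Y}^{(i)}_t)=H^2\cap H^{1,1}\cong\mathbb{Z}$. I expect the main obstacle to be the smoothing step: proving that both local smoothing components of the cone over $D_6$ globalise via Gross's theorem (equivalently, that the relevant obstruction vanishes) and then correctly matching each component to its Milnor-fibre Euler characteristic, so that the two resulting families are genuinely distinct with $h^{1,2}=39$ and $h^{1,2}=40$.
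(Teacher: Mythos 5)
Your overall architecture (primitive contraction, Gross smoothing, Euler-characteristic bookkeeping through the conifold transition and the two smoothing components of the $D_6$-cone) matches the paper's, and your numerics are right: $\chi(X)=-144+2\cdot 36=-72$, $\chi$ of the image $-77$, and Milnor fibres with $\chi(F)=2,0$ coming from the two descriptions of the cone (hyperplane sections of the cones over $\mathbb{P}^1\times\mathbb{P}^1\times\mathbb{P}^1$ and over the $(1,1)$-divisor in $\mathbb{P}^2\times\mathbb{P}^2$) do give $\chi\in\{-76,-78\}$, i.e.\ $h^{1,2}\in\{39,40\}$, as in Table 1. But there is a genuine gap: you assume $\rho(X)=2$ (you use it both for primitivity and for $h^{1,1}(\mathcal{Y}_t)=h^{1,1}(X)-1=1$) without any argument. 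This is the heart of the paper's proof and it is not automatic: $X$ is a small resolution of a nodal complete intersection $X'_{3,3}$, and a priori the $36$ nodes could impose non-trivially many relations, making the defect larger than $1$ and $\rho(X)>2$; then the contraction would not be primitive and the smoothings would not have Picard rank $1$. The paper establishes $\rho(X)=2$ by noting (via \cite[Thm.~3.1]{AK}) that the cubics through $\tilde{D}$ have linear syzygies, restricting to a general cubic $C\supset\tilde{D}$, and showing, as in \cite[Thm.~2.2]{K}, that the morphism defined by these cubics contracts no divisor of $C$ to a curve: by \cite[Prop.~3.1]{AR} any two-dimensional fibre is a plane cutting $\tilde{D}$ in a cubic, such planes lie in the trisecant locus $S_3$, and since $D_6$ is $2$-regular it has no trisecant lines (\cite[Thm.~1.1]{EGHP}), so the trisecants of $\tilde{D}$ form at most a one-dimensional family and $\dim S_3=3$. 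None of this appears in your proposal.

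Two smaller weak points. First, your normality argument for the image of $\varphi_{|2G|}$ (``$2G$ separates the tangent directions at $P$ that $|G|$ missed'') is an assertion, not a proof; the paper derives normality from the surjectivity of the restriction map on quadrics onto $\tilde{D}$ (\cite[Thm.~3.1]{AK}), arguing as in \cite[Thm.~2.3]{K}. Second, for torsion-freeness of $\Pic(\mathcal{Y}_t)$ you invoke simple connectedness of the smoothing, which you do not justify; the paper instead uses $H^2(Y,\mathbb{Z})\simeq H^2(\mathcal{Y}_t,\mathbb{Z})$ from the proof of \cite[Prop.~3.1]{KK}, together with a pullback-of-torsion-sheaves argument and the torsion-freeness of $H^2(X,\mathbb{Z})$. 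Your honest flagging of the globalization of both local smoothing components is fair, but the decisive missing ingredient is the Picard-rank computation, without which both ``primitive'' and ``Picard group $\mathbb{Z}$'' remain unproved.
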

\begin{proof} If we prove that $\varphi_{|2G|}$ is a primitive contraction, then from \cite{G} the resulting singular Calabi--Yau threefolds
can be smoothed. The problem is to show the normality of the
image. Since from \cite[Thm.~3.1(a)]{AK} we have $h^1(\mathcal{I}_{\tilde{D}}(2))=h^1(\mathcal{I}_{D}(2))=0$ the restriction
$\mathcal{O}_{\mathbb{P}^7}(2H)\twoheadrightarrow\mathcal{O}_{\tilde{D}}(2H)$
is surjective. We can conclude as in the proof of
\cite[Thm.~2.3]{K}.

Let us show that the rank $\rho(X)$ of the Picard group of $X$ is
$2$ and that this group is torsion free. First, from
\cite[Thm.~3.1]{AK}, we see that $\tilde{D}$ is $3$-regular so the syzygies between the cubics
containing $\tilde{D}$ are linear. Let $C\supset \tilde{D} $ be a
general smooth cubic in $\mathbb{P}^5$.
 Arguing as in the proof of \cite[Thm.~2.2]{K} we infer that it
is enough to prove that the morphism $\pi$ obtained from the system
of cubics on $C$ containing $\tilde{D}$ does not contract any
divisor to a curve. From \cite[Prop.~3.1]{AR} the two-dimensional
fibers of $\pi$ are planes cutting $\tilde{D}$ along cubic curves.
Such planes are contained in the sum $S_3$ of trisecant lines.
Observe that these trisecant lines are images of trisecant planes to
$D_6$ passing through the center of the projection $Q$ (from
\cite[Thm.~1.1]{EGHP} there are no trisecant lines to $D_6$ since
this surface is cut out by quadrics). It follows that the dimension
of $S_3$ is $3$. Since the system of cubics containing $\tilde{D}$
is base-point-free on $\mathbb{P}^5-\tilde{D}$ we see that there is
at most a one-dimensional family of trisecant lines to $\tilde{D}$,
thus there is no divisor on $C$ that contracts to a curve.

Denote by $Y\subset \mathbb{P}^N$ the image $\varphi_{|2G|}(X)$ and
by $\mathcal{Y}_t$ ($t\in \mathbb{C}$ in the neighborhood of $0$) a
generic element of the smoothing family $\mathcal{Y}$ of $Y$. First
from the proof of \cite[Prop.~3.1]{KK} we have
$H^2(Y,\mathbb{Z})\simeq H^2(\mathcal{Y}_t,\mathbb{Z})$. We claim
that these cohomology groups are torsion free. Indeed, if
$\mathcal{L}$ is a torsion sheaf on $\varphi (X)$ then
$\varphi^*(\mathcal{L})$ is torsion on $X$, moreover it is non-zero
form the projection formula ($\varphi(X)$ is normal). It remains to
recall that $H^2(X,\mathbb{Z})$ is torsion free. Next, the image $T$
of $G$ on $Y$ is an ample divisor ($2T$ is very ample) such that
$T^3=15$ and $h^0(\mathcal{O}(T))=7$ (by Proposition \ref{prop
deg15}). From the discussion in \cite[\S 3]{Wi} we obtain, for $t$
sufficiently small, a flat family of ample (and base-point-free)
divisors $T_t\subset \mathcal{Y}_t$ such that $T_0=T$. It follows
that $T_t^3=15$.

Let us compute $c_2 \cdot T_t$ where $T_t$ is the generator of
$H^2(\mathcal{Y}_t,\mathbb{Z})$. From \cite[Lem.~2.2]{K} we can
embed $\mathcal{Y}_t$ into $\mathbb{P}^N$. This embedding is
clearly given by the complete linear system $2T_t$, since
$h^{1,1}(\mathcal{Y}_t)=1$ and the embedding $\mathcal{Y}_t\subset
\mathbb{P}^N$ is linearly normal
($h^1(\mathcal{I}_{Y|\mathbb{P}^N}(1))=0$). Since $c_2\cdot
2T=108$, we infer $c_2\cdot T_t=54$; this ends the proof.
\end{proof}
\begin{rem}\label{e1} The above families have the same invariants as the
following Calabi--Yau threefolds: \begin{enumerate} \item of
degree $15$ constructed by Tonoli (see \cite{T}); \item of
degree $15$ constructed by Lee in \cite{Lee1} as double cover of a
singular Fano threefold. \end{enumerate} It would be interesting
to know whether the families obtained are exactly the above ones.
\end{rem}
\begin{rem} The Calabi--Yau threefold $X$ is birational to another
interesting Calabi--Yau threefold. Consider the blow-up $Z$ of $X'$
along $\tilde{D}$. Then $Z$ is a Calabi--Yau threefold with Picard
group of rank $2$ thus admits a second primitive contraction or a
fibration. This morphism is given by a multiple of the pull-back
of $T\in|kH-\tilde{D}|$ on $Z$, for some $k\in\mathbb{Z}$, and can
be studied by liaison methods as above. We obtain in this way many
examples of Calabi--Yau threefolds and geometric transitions. This
will be discussed elsewhere.
\end{rem}
\begin{ex}\label{24} In an analogous way, we can embed $\tilde{D}$ into a
Calabi--Yau threefold which is a complete intersection of a quadric
and a quartic. The problem is to prove that in this case we obtain a
nodal Calabi--Yau threefold. Then it must have $42$ nodes, thus the
resulting Calabi--Yau threefolds of degree $14$ have Euler
characteristic $-96$ and $-98$ (probably the degree $14$ examples
from \cite{T} and \cite{Lee1}). In order to use \cite[Thm.~2.1]{K}
we have to prove the following.
\begin{lemm} A generic quadric from the ideal of $\tilde{D}\subset
\mathbb{P}^5$ is smooth.
\end{lemm}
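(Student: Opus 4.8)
The plan is to reduce the smoothness of the generic quadric to the nonvanishing of a single discriminant, and then to verify that nonvanishing on one explicit projection, exactly as elsewhere in the paper.

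By the Remark following Proposition \ref{po}, the ideal of $\tilde{D}\subset\mathbb{P}^5$ is generated by $2$ quadrics and $7$ cubics; in particular $h^0(\mathcal{I}_{\tilde{D}}(2))=2$, so the quadrics through $\tilde{D}$ form a pencil $\mathcal{P}=\{Q_\lambda=Q_0+\lambda Q_1\}_{\lambda\in\mathbb{P}^1}$. Writing $A_\lambda$ for the symmetric $6\times 6$ matrix of $Q_\lambda$, a member is singular precisely when $\det A_\lambda=0$. Now $\det A_\lambda=\det(A_0+\lambda A_1)$ is a form of degree at most $6$ in $\lambda$, so it is either identically zero, in which case every member of $\mathcal{P}$ is singular, or it has at most $6$ roots, in which case all but finitely many members are smooth. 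Thus the lemma is equivalent to the statement that $\det A_\lambda\not\equiv 0$, i.e.\ that $\mathcal{P}$ contains at least one smooth quadric.

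It is worth recording the geometric meaning of this dichotomy. Pulling back along the projection $\pi_Q\colon\mathbb{P}^6\dashrightarrow\mathbb{P}^5$ identifies the quadrics through $\tilde{D}$ with the quadric cones in $\mathbb{P}^6$ of vertex $Q$ that contain $D_6$, and such a cone descends to a smooth quadric in $\mathbb{P}^5$ exactly when it has rank $6$, i.e.\ when its vertex is precisely $\{Q\}$. So the assertion is that the generic quadric cone through $D_6$ with vertex at the generic point $Q$ has corank $1$, and the only thing to exclude is that the entire pencil should consist of cones of corank $\ge 2$.

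To rule this out I would follow the computational strategy used throughout the paper: using Singular, project a concrete $D_6\subset\mathbb{P}^6$ (for instance obtained from an explicit realization as in the proof of Proposition \ref{po}) from a sufficiently general point, compute the two quadric generators of the resulting ideal, and check that a general combination is smooth, so that $\det A_\lambda\not\equiv 0$ for this example. By \cite[Cor.~3.12]{AK} the Hilbert function, and hence $h^0(\mathcal{I}_{\tilde{D}}(2))=2$, is constant over the open set of generic projections, so the pencils $\mathcal{P}$ vary in a family over that open set, and the condition $\det A_\lambda\not\equiv 0$ is open on the base (its failure, the vanishing of all coefficients of $\det A_\lambda$ in $\lambda$, is closed). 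The validity of this condition at the single explicit example therefore propagates to the generic projection $\tilde{D}$, which settles the lemma. The main obstacle is solely the verification of that one example, equivalently the production of a single corank-$1$ cone through $D_6$ with the prescribed vertex; once one smooth member is exhibited, the semicontinuity step is immediate.
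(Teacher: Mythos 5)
Your proposal is correct and matches the paper's proof in essence: both arguments reduce the lemma to exhibiting a single quadric cone of corank $1$ through $D_6$ with vertex at the projection center (equivalently, one smooth member of the pencil through $\tilde{D}$, which is exactly the dichotomy you isolate via $\det A_\lambda\not\equiv 0$) and then spread that witness to the generic projection by genericity. The only cosmetic difference is that the paper produces the witness upstairs in $\mathbb{P}^6$ by hand, using the presentation of the ideal of $D_6$ by $2\times 2$ minors of a $3\times 3$ matrix of linear forms to write down a cone over a smooth quadric explicitly, whereas you verify the equivalent open condition downstairs on the pencil through $\tilde{D}$ with a Singular computation.
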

\begin{proof} It is enough to prove that the generic singular
quadric from the ideal of $D_6\subset\mathbb{P}^6$ is a cone over a
smooth quadric. Recall that the ideal of a del Pezzo surface of
degree $6$ is given by $2\times 2$ minors of a $3\times 3$ matrix
whose entries are linear forms. We find explicitly a cone over a
smooth quadric in this ideal.
\end{proof}
\begin{rem} We compute using Singular that the intersection of two
generic quadrics from $\mathcal{I}_{\tilde{D}}(2)$ is a nodal Fano
threefold with $6$ nodes. It would be interesting to try to apply our
construction to this threefold.
\end{rem}
\end{ex}
\section{degree $24$ Calabi--Yau threefolds}
Let us consider the projections of del Pezzo surfaces of degree $8$.
There are two of them, the Hirzebruch surface $\mathbb{F}_1\subset
\mathbb{P}^8$ and $D_8\subset \mathbb{P}^8$. Denote by $\tilde{D_8}$
and $\tilde{\mathbb{F}_1}$ the projections of $D_8$ and $\mathbb{F}_1$ into
$\mathbb{P}^7$ from a generic point $Q$ in $\mathbb{P}^8$. We shall
embed $\tilde{D_8}$ and $\tilde{\mathbb{F}_1}$ into nodal complete intersections
of four quadrics in $\mathbb{P}^7$. In these cases however more
technical problems arise. We need the following lemma.
\begin{lemm}\label{lem nodal} (cf.~\cite{DH}) Let $X\subset \mathbb{P}^n$ be a reduced two-dimensional sub-scheme whose
ideal is generated by hypersurfaces of degree $d$. Assume that the
 scheme $X$ has no embedded components, and has exactly
one two-dimensional component $X_c$ such that the other components
have smaller dimension and intersect $X_c$ transversally. Then the
generic complete intersection of $n-2$ hypersurfaces of degree $d$
from the ideal of $X$ is a nodal variety with nodes lying on $X$.
\end{lemm}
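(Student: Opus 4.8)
The plan is to study $V$, the generic complete intersection of the statement, first away from $X$ by Bertini and then along $X$ by a local analysis carried out stratum by stratum; the outcome will be that $V$ is smooth except at a finite subset of the top-dimensional component $X_c$, where it acquires ordinary double points. Write $c$ for the number of cutting hypersurfaces, so that $V$ is a threefold, $X_c$ a surface, and $\operatorname{codim}_{\mathbb{P}^n}X_c=c+1$; in all our applications $X_c$ is smooth, and I carry out the argument on its smooth locus.

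First I would pin down the base locus. Since $\mathcal{I}_X$ is generated in degree $d$, the degree-$d$ forms cut out $X$ scheme-theoretically, so $\mathcal{I}_X(d)$ is globally generated and the base locus of the linear system $|\mathcal{I}_X(d)|$ is exactly the reduced scheme $X$. Bertini's theorem then gives that a general member is smooth away from $X$, and iterating over the $c$ general forms shows $V$ is smooth of dimension $3$ outside $X$; in particular $\operatorname{Sing}V\subseteq X$. The same dimension count (using $\dim X=2<3$) shows the $c$ forms meet in pure dimension $3$ with no excess component, so $V$ is a genuine complete intersection threefold containing $X_c$; being a complete intersection it is Cohen--Macaulay, hence has no embedded components.

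Next I would run the local analysis along $X$, using that each cutting form $f_i\in\mathcal{I}_{X,p}$ has $df_i(p)\in\operatorname{Ann}(T_pX)$. \emph{(i)} At a smooth point $p$ of $X_c$ lying on no lower-dimensional component, $\mathcal{I}_{X,p}=\mathcal{I}_{X_c,p}$ is generated in degree $d$, so the differentials of the cutting forms restrict to $c$ sections of the rank-$(c+1)$ conormal bundle $N^{\ast}_{X_c/\mathbb{P}^n}$, and $V$ is singular exactly where these degenerate; since $\mathcal{I}_X(d)$ is globally generated the sections move freely, so by general position the degeneracy locus $\Sigma$ has the expected codimension two in the surface $X_c$ — a finite set — with generic corank one. \emph{(ii)} At a point $q$ of a lower-dimensional component $\ell$ with $q\notin X_c$, the linear forms of $\mathcal{I}_{\ell,q}$ already fill $\operatorname{Ann}(T_q\ell)$, whose dimension is at least $c$, so $c$ general differentials remain independent and $V$ is smooth at $q$. \emph{(iii)} At an intersection point $p\in X_c\cap\ell$, transversality gives $\dim(T_pX_c+T_p\ell)=3$, so $\operatorname{Ann}(T_pX_c+T_p\ell)$ has dimension $c$ and is realized by the $c$ independent linear forms of $\mathcal{I}_{X,p}$ vanishing on both branches; the cutting differentials again span it, so $V$ is smooth at $p$. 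Thus transversality is precisely what prevents the meeting points of $X_c$ with the lower-dimensional part from becoming singular, and a general choice of forms keeps $\Sigma$ disjoint from these finitely many points.

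It remains to identify the singularities of $V$ along $\Sigma$ as nodes, which is the heart of the matter. At $p_0\in\Sigma$ the corank is one, so after relabelling $f_1,\dots,f_{c-1}$ cut out a smooth fourfold $M\ni p_0$ and $V=\{f_c|_M=0\}$ with $f_c|_M$ critical at $p_0$; I must show that for a general last form the Hessian of $f_c|_M$ at $p_0$ is nondegenerate, giving an ordinary double point. The main obstacle is exactly this second-order control: one has to know that $|\mathcal{I}_X(d)|$ separates $2$-jets along $X_c$ well enough to force the quadratic part to be nondegenerate (ruling out $A_{\geq 2}$ or non-isolated singularities), and that the lower-dimensional components, pinned down only to first order by transversality, do not interfere. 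As elsewhere in the paper I would verify the nondegeneracy of the Hessian on a single explicit example computed with Singular and then propagate it to the generic member by semicontinuity of the singularity type; together with the reducedness and the absence of embedded components, which guarantee that the three local models above are the only ones occurring, this shows that $\operatorname{Sing}V=\Sigma\subset X_c\subset X$ consists of nodes. Finally $V$, a complete intersection in $\mathbb{P}^n$ smooth off the finite set $\Sigma$, is reduced and connected, hence an irreducible nodal variety with nodes on $X$, as asserted.
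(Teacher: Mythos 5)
Your outline---Bertini off $X$, then a stratified local analysis along $X$---matches the paper's strategy in broad shape, but the central claim, namely that the finitely many singular points on $X_c$ are \emph{ordinary double points}, is exactly what your proposal does not prove. You defer the nondegeneracy of the Hessian to a Singular computation on ``a single explicit example'' followed by semicontinuity; this cannot establish Lemma~\ref{lem nodal}, which is quantified over \emph{all} reduced $X\subset\mathbb{P}^n$ satisfying the hypotheses, so there is no one example whose verification could propagate. Semicontinuity over the Grassmannian of $(n-2)$-dimensional subspaces of $H^0(\mathcal{I}_X(d))$ only reduces ``the generic member is nodal'' to ``some member is nodal'' for a \emph{fixed} $X$, and producing such a member for an arbitrary $X$ is precisely the content of the lemma. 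The paper fills this in differently: it declares the proof ``a generalization of \cite[Thm.~2.1]{K}'', inheriting the ODP analysis along the two-dimensional component from that theorem (where it is obtained geometrically via the morphism given by $H^0(\mathcal{O}(d)\otimes\mathcal{I}_X)$ and the correspondence between complete intersections containing $X$ and linear spaces $L_C\subset\mathbb{P}^N$), so that the only new content is smoothness along the lower-dimensional components $X_e$. There, at $q\in X_e-X_c$, a singularity forces $L_C$ to meet the linear space $p(\pi^{-1}(q))$ non-transversally, excluded for dimension reasons (your step (ii) in different clothing); at $q\in X_e\cap X_c$ the paper analyzes the local model of the blow-up of a line-plus-plane, where the exceptional fiber is a $\mathbb{P}^{n-2}$ with two distinguished coordinates, and concludes by Bertini upstairs.

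Your step (iii) also contains an inconsistency. With $c=n-2$, transversality gives $\dim(T_pX_c+T_p\ell)=3$, hence the annihilator has dimension $n-3=c-1$, \emph{not} $c$: in the local model $\mathcal{I}_{X,p}=(x_4,\dotsc,x_n,\,x_1x_3,\,x_2x_3)$ the linear parts span only $\langle dx_4,\dotsc,dx_n\rangle$, the quadrics contributing nothing to first order. So the $c$ cutting differentials at $p$ are necessarily dependent, and your assertion that $c$ of them are independent would give $\dim T_pV=n-c=2$, which is absurd since $V$ contains $X_c\cup\ell$, whose tangent spaces at $p$ span a $3$-dimensional space. The conclusion survives the correction: for a generic choice the $c$ differentials span the $(n-3)$-dimensional linear part, so $\dim T_pV=3=\dim_p V$ and $V$ is smooth at $p$; with that repair your direct tangent-space computation is a legitimate substitute for the paper's local blow-up model (whose two distinguished coordinates record exactly the quadrics $x_1x_3,x_2x_3$ that are invisible to first order, which is why the paper needs Bertini on the exceptional $\mathbb{P}^{n-2}$ rather than a naive Jacobian count). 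But this repair does not supply the missing Hessian argument above, which remains the genuine gap.
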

\begin{proof} This proof is a generalization of \cite[Thm.~2.1]{K}.
We need only prove that if $X_e$ is a component of codimension
$>n-2$ then the intersection of $n-2$ hypersurfaces from
$H^0(\mathcal{I}_X(d))$ is non-singular along $X_e$. Let us
consider the following diagram:
\[\begin{array}{ccc}\mathbb{P}^n &\dashrightarrow & \mathbb{P}^{N}\\
\uparrow& \nearrow&\\
 \overline{\mathbb{P}^n}&&
\end{array}
\]
where the vertical map is the blow-up of $\mathcal{I}_{X}$, denoted
by $\pi$. The variety $\overline{\mathbb{P}^n}$ can be seen as
the closure of the graph of the morphism given by
$(q_0:\dots:q_N)$ (here $q_0,\dots,q_N$ are the degree $d$
generators of the ideal of $X$). The horizontal map $\beta$ is
given by the linear system $H^0(\mathcal{O}(d)\otimes
\mathcal{I}_{X})$. The remaining map is the projection $p\colon
\mathbb{P}^n\times\mathbb{P}^N\rightarrow \mathbb{P}^N$.

The complete intersections $C$ containing $X$ are the
pre-images of linear spaces $L_C$ in $\mathbb{P}^N$. A singularity
appears on a normal $C$ (in particular generic, see \cite{DH}) at
$q\in X_e-X_c$ iff $L_C$ intersects the linear space $p(\pi^{-1}(q))$
non-transversally. For dimensional reasons a generic $L_C$ does
not contain any such linear space.

 Let us consider the points $q\in X_e\cap X_c$. Locally analytically the blow-up of $\mathbb{C}^{n}$ along the union $S$ of a line and a plane passing through $0$ has
 $\mathbb{P}^{n-2}$ as exceptional locus over $0$ with two distinguished coordinates corresponding to
quadrics from the ideal of $S$. Moreover, an analytic germ at $0$
containing $S$ is smooth if its strict transform on the
exceptional divisor is smooth and meets transversally the
codimension $2$ linear space determined by the two distinguished
coordinates. We conclude that the exceptional divisor
$\pi^{-1}(q)\subset \mathbb{P}^N$ is isomorphic to
$\mathbb{P}^{n-2}$ with two distinguished coordinates. From the
Bertini theorem a complete intersection corresponding to $L_C$ is
smooth at $q$ for a generic choice of $L_C$.\end{proof}
\begin{prop}\label{lem24}  The intersection of the $11$
quadrics from the homogeneous ideal $\mathcal{I}_{\tilde{\mathbb{F}_1}}\subset
\mathbb{P}^7$ defines scheme-theoretically the union of $\tilde{\mathbb{F}_1}$
and the unique trisecant line to $\tilde{\mathbb{F}_1}$ that is transversal to
$\tilde{\mathbb{F}_1}$.
\end{prop}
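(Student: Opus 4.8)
The plan is to identify the unique trisecant line, show it is forced into every quadric through $\tilde{F}$, and then check that the eleven quadrics cut out nothing beyond $\tilde{F}$ together with this line. Write $\ell$ for the trisecant line to $\tilde{F}$; its existence for a generic projection is recorded in the proof of Proposition \ref{po}, and for a generic center $Q$ it meets $\tilde{F}$ in three distinct points. First I would show that $\ell$ lies in the base locus of the eleven quadrics. Indeed, restricting any $q\in H^0(\mathcal{I}_{\tilde{F}}(2))$ to $\ell\cong\mathbb{P}^1$ gives a form of degree $2$ vanishing on the length-$3$ subscheme $\ell\cap\tilde{F}$; hence $q|_{\ell}\equiv 0$ and $\ell\subset\{q=0\}$. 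Thus every quadric through $\tilde{F}$ contains $\ell$, so $H^0(\mathcal{I}_{\tilde{F}}(2))=H^0(\mathcal{I}_{\tilde{F}\cup\ell}(2))$, both of dimension $11$ by Proposition \ref{po}, and the eleven quadrics are exactly the quadrics through $\tilde{F}\cup\ell$.

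It then remains to prove that these eleven quadrics generate the saturated ideal of $\tilde{F}\cup\ell$ --- equivalently, that $\mathcal{I}_{\tilde{F}\cup\ell}$ is generated in degree $2$ and that the scheme they cut out is reduced with $\ell$ meeting $\tilde{F}$ transversally. This also clarifies the role of the extra cubic generator of $\mathcal{I}_{\tilde{F}}$ from Proposition \ref{po}: that cubic is precisely a form vanishing on $\tilde{F}$ but not on $\ell$, so adjoining it to the quadrics deletes $\ell$ and recovers $\tilde{F}$. Following the strategy of Proposition \ref{po}, I would verify the generation and transversality on one explicit generic projection computed in Singular \cite{GPS}: take a concrete $\mathbb{F}_1\subset\mathbb{P}^8$, project from a general point, compute a Gr\"{o}bner basis of the ideal generated by the eleven quadrics together with its saturation, and check that the saturation equals the radical, quadric-generated ideal of $\tilde{F}\cup\ell$ with the three intersection points being nodes of the union. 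Zariski semicontinuity of the graded Betti numbers and of the Hilbert function over the open locus of generic projections (cf.~\cite[Prop.~2.15]{BG}, as used in Proposition \ref{po}) then propagates the conclusion to the generic $\tilde{F}$.

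The main obstacle is the scheme-theoretic, as opposed to merely set-theoretic, assertion: a priori the eleven quadrics could cut out a scheme strictly larger than $\tilde{F}\cup\ell$, either carrying an embedded or non-reduced structure along the length-$3$ locus $\ell\cap\tilde{F}$, or failing to impose enough relations at the three intersection points. Controlling this amounts to showing that no saturation beyond degree $2$ is needed and that the trisecant is transversal, which is exactly why the statement singles out the trisecant that is transversal to $\tilde{F}$: transversality is what makes $\tilde{F}\cup\ell$ reduced at those three points, and it is precisely the hypothesis needed to present $\tilde{F}\cup\ell$ as the reduced two-dimensional scheme $X$ of Lemma \ref{lem nodal}, with one two-dimensional component $X_c=\tilde{F}$ and lower-dimensional component $\ell$ meeting $X_c$ transversally.
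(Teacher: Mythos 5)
Your opening step is fine: the degree argument showing every quadric through $\tilde{F}$ vanishes on the trisecant line $\ell$ is correct, and the reduction to proving that $\mathcal{I}_{\tilde{F}\cup\ell}$ is generated in degree $\leq 3$ with the cubics redundant is exactly the right target. But your propagation step has a genuine gap. Semicontinuity of graded Betti numbers in the sense of \cite[Prop.~2.15]{BG} requires a \emph{flat} family with constant Hilbert function, and the constancy you cite from Proposition \ref{po} (via \cite[Cor.~3.12]{AK}) applies only to the projected surface $\tilde{F}_Q$ itself, not to the union $\tilde{F}_Q\cup\ell_Q$ as the center $Q$ varies. Nothing in your argument shows that $Q\mapsto \tilde{F}_Q\cup\ell_Q$ is a flat family, nor that its Hilbert function is constant near your computed example; without this, a single Singular computation proves the statement only for that one projection. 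This is not a pedantic point: the paper confronts precisely this difficulty in the analogous Proposition \ref{prop ost}, where it must prove Lemma \ref{lem3} (flatness of the family of unions with the multisecant lines, via Le Barz's Hilbert-scheme results \cite{LB}) and then combine an upper-semicontinuous regularity bound with a check of the Hilbert function in the finitely many remaining degrees before semicontinuity can be invoked. Your proposal would need the same apparatus, and it is the hard part.

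It is also worth noting that the paper's actual proof of Proposition \ref{lem24} takes a different, computation-free route: uniqueness and transversality of the trisecant come from \cite[Example 5.16]{CR}; one then observes directly that any cubic through $\ell\cup\tilde{F}$ has the form $\sum a_iq_i+b\cdot c$ with $b=0$ (since $c|_\ell\neq 0$), so it suffices to show $\mathcal{I}_{\ell\cup\tilde{F}}$ is generated by cubics; this is obtained from Lemma \ref{HK}, a generalization of \cite{HK} on inner projections and property $N_{3,p}$, applied to $\mathbb{F}_1\cup L\subset\mathbb{P}^8$ ($L$ the trisecant plane through $Q$), whose ideal is shown to be quadric-generated using the restriction-of-syzygies theorems of \cite{EGHP} and the $N_{2,5}$ property of $\mathbb{F}_1$ --- including the exclusion of embedded components along $L\cap D_8$, which your saturation check handles only for one example. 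So your strategy is salvageable but incomplete as written; either supply the flatness/Hilbert-function-constancy lemma for the family of unions, or replace the computational step by the syzygy-theoretic argument.
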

\begin{proof} It follows from \cite[Example 5.16]{CR} that there
is exactly one trisecant plane to $\mathbb{F}_1$ (it is transversal
to $\tilde{\mathbb{F}_1}$) passing through a generic point in $\mathbb{P}^8$,
so there is a unique trisecant line $t$ to $\tilde{\mathbb{F}_1}$.

Each cubic containing $t\cup \tilde{\mathbb{F}_1}$ is of the form
$$a_1q_1+\dotsc+a_{11}q_{11}+b\cdot c$$ where $q_1,\dotsc,q_{11}$
are the quadric generators of $\tilde{\mathbb{F}_1}$, $c$ the cubic generator,
$a_1,\dotsc,a_{11}$ are linear forms, and $b$ is a constant. It
follows that $b=0$ since $c|_t\neq 0$. It is enough to prove that
the ideal defining $t\cup \tilde{\mathbb{F}_1}$ is generated by cubics. We need
the following generalization of \cite[Cor.~3.4]{HK}.
\begin{lemm}\label{HK} Let $X\subset \mathbb{P}^n$ be a non-degenerate
projective variety satisfying property $N_{3,p}$ and $q$ be a
smooth point of $X$. Suppose that $\mathcal{I}_X$ is generated by
quadrics. Consider the inner projection $\pi_q\colon X\rightarrow
Y\subset\mathbb{P}^{n-1}$. Then the projected variety $Y$ satisfies
property $N_{3,p-1}$.
\end{lemm}
\begin{proof} The proof is analogous to the proof of \cite[Thm.~3.1]{HK}.
Indeed, we have an exact sequence
$$0\to I_Y\to I_X\to I_X/I_Y\to o$$
 of $S:=k[x_0,\dots,x_{n-1}]$-modules where $I_X$ and $I_Y$ are the ideals of $X$ an $Y$.

 Suppose that we proved that $Tor^S_{l-2}(I_Y,k)_{l+2}=0$ for some $l\leq p$ we will show that $Tor^S_{l-1}(I_Y,k)_{l+3}=0$. From the corresponding long exact sequence of Tor's we obtain the sequence:
 $$ Tor^S_l(I_X,k)_{l+3}\xrightarrow{a} Tor^S_l(I_X/I_Y,k)_{l+3}\to Tor^S_{l-1}(I_Y,k)_{l+3}$$
$$ \to Tor^S_{l-1}(I_X,k)_{l+3}\xrightarrow{b} Tor^S_{l-1}(I_X/I_Y,k)_{l+3}\to 0.$$
It is enough to show that $b$ is an injective and $a$ is surjective.
Following \cite{HK} we have a diagram
\[\begin{array}{ccc} Tor^S_l(I_X,k)_{l+3} & \xrightarrow{a} & Tor^S_l(I_X/I_Y,k)_{l+3}\\
\downarrow &&\downarrow \\

Tor^S_l(I_X,k)_{l+N}  & \xrightarrow{c}  & Tor^S_l(I_X/I_Y,k)_{l+N}
\end{array}
\]
where $N$ is such that $Tor^S_i(I_Y,k)_{i+N}=0$ for all $i\in\mathbb{Z}$; thus such that $c$ is surjective.
Since $\mathcal{I}_X$ is generated by
quadrics, the right vertical map is surjective from \cite[Prop.~2.5 (b)]{HK}. The left vertical map is surjective from the elimination mapping cone sequence from \cite[Thm.2.1(b)]{HK}. The injectivity of $a$ is proven similarly.
\end{proof}
To use Lemma \ref{HK} we need to show that the ideal of
$\mathbb{F}_1\cup L\subset \mathbb{P}^8$, where $L$ is a general
trisecant plane, is generated by quadrics. Let $\mathbb{P}^8\supset
H$ be a $3$-dimensional linear space containing $L$. From
\cite[Thm.~1.1]{EGHP} it follows that $H$ cuts $\mathbb{F}_1$ in at
most four points. Since $\mathbb{F}_1$ satisfies property $N_{2,5}$
we deduce from \cite[Thm.~1.2]{EGHP} that each reducible quadric $W$
containing $L$ and the schematic intersection of $H$ and
$\mathbb{F}_1$, is the restriction of one from the $17$-dimensional
family $\Gamma$ of quadrics containing $\mathbb{F}_1$. It follows
that the scheme determined by the quadrics from $\Gamma$ cannot have
other components then $\mathbb{F}_1$ and $L$ (there are no embedded
components with support contained in $L\cap \mathbb{F}_1$ since $H$
can be chosen generically thus this component would be contained in
$L$).\end{proof} From Lemma \ref{lem nodal} the generic intersection
$X_1'$ (resp.~$X_2'$) of four quadrics containing $\tilde{D_8}$
(resp.~$\tilde{\mathbb{F}_1}$) is a nodal Calabi--Yau threefold with $42$
(resp.~$36$) nodes on $\tilde{D_8}$ (resp.~$\tilde{\mathbb{F}_1}$). We denote by
$X_1$ (resp.~$X_2$) the Calabi--Yau threefold obtained by the
flopping of the exceptional curves of the blow-up $X_1''\rightarrow
X'_1$ (resp. $X_2''\rightarrow X'_2$), and by $D$' (resp.~$F'$) the
strict transforms of $\tilde{D_8}$ (resp.~$\tilde{\mathbb{F}_1}$).
\begin{thm}\label{asd} The Calabi--Yau threefolds $X_1$ and $X_2$ have Picard
group of rank $2$. There exist primitive contractions
$X_1\rightarrow Y_1$ and $X_2\rightarrow Y_2$ with exceptional
divisors $D'$ and $F'$ respectively. Moreover, the threefold $Y_1$
can be smoothed by a family of Calabi--Yau threefolds of degree $24$
with Picard groups of rank $1$ and $h^{1,2}=26$.
\end{thm}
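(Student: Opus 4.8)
The plan is to run the argument of the degree $15$ theorem in parallel for the two cases, isolating the single point where the del Pezzo $\tilde D$ (smoothable) and $\tilde F$ (non-smoothable) diverge.

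First I would establish that $\rho(X_1)=\rho(X_2)=2$ with torsion-free Picard group. From \cite[Thm.~3.1]{AK} the syzygies among the quadric generators of $\mathcal I_{\tilde D}$ and of $\mathcal I_{\tilde F}$ are linear, so, arguing as in \cite[Thm.~2.2]{K}, it is enough to show that the morphism $\pi$ defined by the quadrics, restricted to a general hypersurface section $C\supset\tilde D$, contracts no divisor to a curve. By \cite[Prop.~3.1]{AR} the two-dimensional fibres of $\pi$ are linear spaces meeting the projected surface along conics, and these linear spaces sweep out the union of the multisecant lines. For $\tilde F$ this union is controlled by Proposition \ref{lem24}, which identifies the base locus of the eleven quadrics as $\tilde F$ together with the single trisecant line $t$; hence the relevant family is at most one-dimensional and no divisor is contracted to a curve. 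For $\tilde D$ I would bound the dimension of the family of multisecant lines using the enumerative formulas of Le Barz \cite{LB,LB1}. Torsion-freeness of $H^2(X_i,\mathbb Z)$ then follows exactly as in the degree $15$ case, giving $\rho(X_i)=2$.

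Next I would produce the primitive contractions. Setting $G\in|H^{*}+D'|$ one checks, as in Proposition \ref{prop deg15}, that $H^{*}|_{D'}=-K_{D'}$ (the projection being an isomorphism on $D_8$) and $D'|_{D'}=K_{D'}$ by adjunction, so $G|_{D'}$ is trivial; thus $|G|$, or a suitable multiple $|2G|$ via \cite[Lem.~2.5]{KK}, is base-point-free and contracts $D'$ to a point while being an isomorphism elsewhere, and similarly for $F'$. Normality of the images $Y_1,Y_2$ follows from the surjectivity of $\mathcal O_{\mathbb P^{7}}(2H)\twoheadrightarrow\mathcal O_{\tilde D}(2H)$ provided by \cite[Thm.~3.1]{AK}, arguing as in \cite[Thm.~2.3]{K}. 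Since each contracted divisor is irreducible and $\rho(X_i)=2$, the contractions are primitive; as the exceptional divisors are (generalized) del Pezzo surfaces, with $F'\cong\mathbb F_1$, they are of type II.

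Finally, for the smoothing of $Y_1$ I would invoke \cite{G}: it suffices that the singularity of $Y_1$ be smoothable. The germ at the singular point is a cone over the projected $D_8$, and since $D_8$ is the double Veronese of a quadric this cone is smoothable, as in Remark \ref{rem1}, by taking hyperplane sections of a cone over a suitable variety; this is exactly the step that fails for $Y_2$. Granting this, $Y_1$ is smoothed by Calabi--Yau threefolds $\mathcal Y_t$, and I would read off the invariants: expanding $G^{3}=(H^{*}+D')^{3}=(H^{*})^{3}+3(H^{*})^{2}D'+3H^{*}(D')^{2}+(D')^{3}$ with $(H^{*})^{3}=16$ (the degree of the complete intersection of four quadrics) and the restriction formulas above gives $T^{3}=16+24-24+8=24$. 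The Picard rank of $\mathcal Y_t$ is $1$, since $X_1\to Y_1$ lowers $\rho$ by one so $\rho(Y_1)=1$, and $H^{2}(Y_1,\mathbb Z)\cong H^{2}(\mathcal Y_t,\mathbb Z)$ by \cite[Prop.~3.1]{KK} (torsion-free by the argument above). Computing $\chi(\mathcal Y_t)=-50$ from the $42$ nodes by the method of \cite{KK}, together with $h^{1,1}=1$, yields $h^{1,2}=26$. The main obstacle I expect is the normality of $Y_1$ together with the verification that the contraction is genuinely primitive, run alongside the smoothability of the cone over the projected $D_8$ -- the last point being precisely where the two cases part ways, with $Y_2$ non-smoothable.
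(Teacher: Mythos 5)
Your second and third paragraphs track the paper: the contraction construction via $G\in|H^*+D'|$, normality via \cite[Thm.~2.3]{K}, smoothing via \cite{G}, the computation $G^3=16+24-24+8=24$, and $\chi(\mathcal{Y}_t)=-50$ giving $h^{1,2}=26$ all match what the paper dispatches with ``the other claim follows as before.'' The gap is in your first paragraph, and it is exactly the point where this theorem differs from the degree $15$ case. You assert that by \cite[Thm.~3.1]{AK} the syzygies among the quadric generators of $\mathcal{I}_{\tilde D}$ and $\mathcal{I}_{\tilde F}$ are linear, and then run the argument of \cite[Thm.~2.2]{K} with the Alzati--Russo fiber analysis. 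But this premise is false: the paper opens its proof by noting that ``the syzygies between the quadric generators of $\tilde{D}$ and $\tilde{F}$ are \emph{not} generated by linear forms, we cannot compute $\rho(X_1)$ and $\rho(X_2)$ as before.'' Projection from a point degrades the linearity of the resolution (the projected surfaces have nonzero quadratic second syzygies), and \cite[Prop.~3.1]{AR} requires the linear-syzygy hypothesis even to set up the morphism whose two-dimensional fibers you propose to control; your fallback via Le Barz's multisecant counts bounds the wrong thing and does not restore the hypothesis. So your route to $\rho(X_i)=2$ collapses at the first step.

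The paper's actual mechanism is different and is the main new idea here: it considers the cone $C$ over $D_8\subset\mathbb{P}^8$ with vertex $Q$ the center of projection, and the morphism $\alpha\colon\mathbb{P}^8-C\rightarrow\mathbb{P}^{10}$ given by the quadrics through $C$. Factoring $\alpha$ as the map $\beta$ given by $H^0(\mathcal{O}(2)\otimes\mathcal{I}_{D_8})$ (an embedding off $Sec(D_8)$ by \cite[Prop.~3.1]{AR}, legitimately applied upstairs where the syzygies \emph{are} linear) followed by a projection from a $9$-dimensional linear space $L$ with $L\cap\overline{\beta(\mathbb{P}^8)}=\beta(C)$, one checks via \cite[Thm.~1.2]{EGHP} that lines not in $Join(D_8,C)$ map to lines or conics missing $L$, so $\alpha$ is an embedding outside a codimension $\geq 2$ set. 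The Picard rank then comes from the Grothendieck--Lefschetz theorem for normal projective varieties of Ravindra--Srinivas \cite[Thm.~6]{RS}, not from any contraction-of-divisors analysis on a cubic or quadric hypersurface. Without this (or some substitute argument valid in the absence of linear syzygies), your proposal does not prove $\rho(X_1)=\rho(X_2)=2$, and hence does not establish primitivity of the contractions or the rank $1$ statement for the smoothing.
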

\begin{proof} Since the syzygies between the quadric generators of
$\tilde{D_8}$ and $\tilde{\mathbb{F}_1}$ are not generated by linear forms, we
cannot compute $\rho(X_1)$ and $\rho(X_2)$ as before. Let us
concentrate on $X_1$.

Consider the cone $C$ over $D_8\subset \mathbb{P}^8$ with vertex $Q$
being the center of the projection. Let $\alpha \colon \mathbb{P}^8-
C\rightarrow \mathbb{P}^{10}$ be the morphism given by the system of
quadrics containing the cone $C$.
\\ \emph{Claim}:
The morphism $\alpha$ is an embedding outside the subset
$Join(D_8,C) $ (i.e.~the sum of lines joining points on $D_8$ with
points on $C$) of codimension $\geq 2$.

We show that the image under the morphism $\alpha$ of a line
$l\subset \mathbb{P}^8$ that is not contained in $Join(D_8,C)\subset
\mathbb{P}^8$ is a line or a conic.
 Let $\beta\colon \mathbb{P}^8 \dashrightarrow \mathbb{P}^{19}$
be given by the system $H^0(\mathcal{O}(2)\otimes
\mathcal{I}_{D_8})$. From \cite[Prop.~3.1]{AR}, it is an embedding
off $Sec(D_8)$. By Proposition \ref{lem24} the image $\beta(C)$ is
contained in a $9$-dimensional linear space $L$ such that $L$ cuts
exactly $\beta(C)$ out of the closure of $\beta(\mathbb{P}^8)$. The
morphism $\alpha$ can be seen as the composition of $\beta$ with the
projection from $L$.
 From \cite[Thm.~1.2]{EGHP} the image $\beta(l)$ is
 either a plane conic disjoint from $\beta(C)$
so also disjoint from $L$, or a line disjoint from $L$ (if $l$
intersect $D_8$). The claim follows since any $10$-dimensional
linear space containing $L$ meets $\beta(l)$ in zero, one or, if
$\beta(l)$ spans a plane that intersects $L$, in two points.

From \cite[Thm.~6]{RS} we obtain $\rho(X_1)=\rho(X_2)=2$; the other
claim follows as before.\end{proof}
\begin{rem} The example  with
$\mathbb{F}_1$ as exceptional locus completes the classification in
\cite[Thm.~2.5]{K} of exceptional loci of primitive contractions of
type II. The image of such contraction is not smoothable.
\end{rem}
\begin{rem} Let us project $D_8$ from a point in
$Sec_3(D_8)-Sec_2(D_8)$. The resulting surface $\tilde{D_8}$ has then
one cubic and $11$ quadric generators. Since $Sec_3(D_8)$ is
degenerate, the intersection of the $11$ quadrics is the union of
$\tilde{D_8}$ and a plane intersecting $\tilde{D_8}$ along a cubic. We
cannot perform the previous construction in this case since Lemma
\ref{lem nodal} does not work.
\end{rem}
\section{higher codimension projections}\label{OE}
In this section we consider all the remaining cases when our method works and produces Calabi--Yau threefolds with $h^{1,1}=1$.
In order to obtain smoothable Calabi--Yau threefolds we treat the del Pezzo surfaces $D_7$ and $D_8$.
By analyzing the Betti tables of projections of del Pezzo surfaces we see that we have four remaining possibilities in order that the generic projection of a del Pezzo surface is contained in a nodal Calabi--Yau complete intersection. The possibilities are $\tilde{\tilde{D_7}}\subset\mathbb{P}^5$, $L_P\subset \mathbb{P}^5$ (the projection of $D_8\subset \mathbb{P}^8$ from the plane $P\subset\mathbb{P}^8$), $\tilde{D_7}\subset\mathbb{P}^6$, $\tilde{\tilde{D_8}}\subset\mathbb{P}^6$.
The latter two cases leads to constructions of new families of Calabi--Yau threefolds.
\begin{thm}\label{kl} There exist a Calabi--Yau threefold with Picard group of rank $1$ of degree $19$ (resp.~$20$) with $h^{1,2}=23$ (resp.~$h^{1,2}=31$).
\end{thm}
\begin{proof}
 The plan of the proof is the following: we find explicitly the centers of projections such that the surfaces $\tilde{\tilde{D_8}}\subset\mathbb{P}^6$ and $\tilde{D_7}\subset \mathbb{P}^6$ are embedded into nodal Calabi--Yau threefolds in
$\mathbb{P}^6$ which are complete intersections $X_1$ (resp.~$X_2$) of two quadrics and
a cubic with $37$ (resp.~$44$) nodes. Next analogically as before we show that the Picard group of small resolutions of $X_1$ (resp.~$X_2$) has rank $2$. The Calabi--Yau threefolds we are looking for are obtained after the smoothing of the primitive contractions of $X_1$ (resp.~$X_2$).

Let us consider the harder case $\tilde{\tilde{D_8}}$.
We find explicitly using Singular with a random choice of the center of projection, two quadrics containing a
projected surface intersecting each other along a smooth threefold $Y$ (it would be interesting to prove that this holds for a generic choice of the center). From the Lefshetz theorem the Picard group of the intersection is $1$.
To prove that the threefold $X_1$ has nodes it is enough to show, by same arguments as in Lemma \ref{lem nodal}, that the ideal $\mathcal{I}_{\tilde{\tilde{D_8}}|Y}$ is generated by cubics.
To compute the rank of the Picard group of $X_1$ we use \cite[Thm.~2.1]{K}, however
to make it work we need to show, that the projected surfaces are $3$-regular. Indeed, we compute that for a random choice of the center of projection the Betti table of the resolution of the ideal of $\tilde{\tilde{D_8}}$ is the following:
\begin{center}
            \renewcommand*{\arraystretch}{0.7}
\begin{table}[h]

$\begin{tabular}{|ccccccc}
1&0&0&0&0&0&0\\
0&3&0&0&0&0&0\\
0&14&53&68&43&14&2
  \ \end{tabular}$
 \end{table}
            \end{center}
 It follows that the syzygies between the cubics are linear thus the exceptional set of the morphism given by the cubics containing $\tilde{\tilde{D_8}}$ (resp.~$\tilde{D_7}$) is well described.
In particular this linear system does not contract divisors on the intersection of our two quadrics. We can conclude as in the proof of Theorem \ref{we}.
\end{proof}
\begin{rem} From the proof below we deduce that the general projection $\tilde{\tilde{D_8}}\subset \mathbb{P}^6$ needs $14$ cubics an $3$ quadrics generators.
\end{rem}
Finally let us consider the remaining two projections. It is interesting that in those cases we obtain Calabi--Yau threefolds with the same invariants as the examples of Calabi--Yau threefolds in $\mathbb{P}^6$ constructed by Tonoli.
To prove this we have to understand the properties of the projected del Pezzo surfaces.
From \cite[Cor.~3.1]{AK} we know that the projection from a line (resp.~plane) is $4$-regular (resp. $5$-regular); we can prove more. The following proposition can be checked for a random center of projection using Singular; we aimed however to present here a formal proof.
 \begin{prop}\label{es}\begin{enumerate}\item The ideal of the surface $\tilde{\tilde{D_7}}\subset \mathbb{P}^5$ is generated by $1$ quartic and $13$
cubics and this surface has exactly one quadrisecant line.
\item The ideal defining $L_P\subset \mathbb{P}^5$ needs $7$
quartic and $7$ cubics generators, and this surface has
exactly $7$ quadrisecant lines.\end{enumerate}
Moreover,the intersection of the
cubics from the homogeneous ideal of $\tilde{\tilde{D_7}}\subset \mathbb{P}^5$
(resp.~$L_P\subset \mathbb{P}^5$) defines scheme-theoretically the
union of $\tilde{\tilde{D_7}}$ and the unique quadrisecant line (resp.~the union of
$L_P$ and the quadrisecant lines).
 \end{prop}
\begin{proof} First to compute the number $q(V)$ of quadrisecant lines to a surface $V\subset \mathbb{P}^5$ let us introduce after Le Barz the invariants
$n=deg(V)$, $d$, $\delta$, $t$. Denote by $V'\subset \mathbb{P}^4$ (resp.~$V''\subset \mathbb{P}^3$) the generic projection of $V$ from a point (resp.~a line). Then $d$ is the degree of the double curve on $V''$, $t$ the number of triple points on $V''$, and $\delta$ the number of double points on $V'$.
If $V\subset \mathbb{P}^5$ do not contain lines (like in the case of $L_P$) then by \cite{LB2} the number of quadrisecant lines is computed as follows
$$q(V)=13 {n\choose 4}-3n(n-4)(2n-3)+t(2n-27)+{\delta\choose 2}+\delta(7-2n)+{d\choose 2}-d(2n^2-29n+83).$$
Since $D_7$ contains one line with self intersection $-1$ we have to subtract $1$ in the formula above.
To compute the invariants $d$, $\delta$, $t$ we use either Kleiman's multiple points formulas or \cite[p.~59]{LB1}. In particular for the surface $L_P$ we obtain $d = 20$, $\delta = 10$ and $t = 20$.

We shall now prove that there is $7$ cubic and at most $7$ quartics generators of $L_P$ for generic $P\in U\subset G(3,9)$.
Let $D\subset \mathbb{P}^5$ be a del Pezzo surface embedded by a subsystem of the anti-canonical system.
From the Riemann--Roch theorem we deduce that $$h^0(\mathcal{O}_D(n))=\frac{1}{2}n(n+1)\deg(D)+1$$ for $n\geq 0$ this gives us the difference
$h^0(\mathcal{I}_D(n))-h^1(\mathcal{I}_D(n))$.
Let us consider the harder case $(2)$.
  We compute for a random example that $h^0(\mathcal{I}_{L_P}(2))=0$ and $h^0(\mathcal{I}_{L_P}(3))=7$.
 Using the exact sequence \begin{equation}\label{q2} 0\to \mathcal{I}_{L_P}\to \mathcal{O}_{\mathbb{P}^5}\to \mathcal{O}_{L_P}\to 0\end{equation} we find that
$h^1(\mathcal{I}_{L_P}(k))=0$ for $k\geq 3$ moreover $h^1(\mathcal{I}_{L_P}(1))=3$ and $h^1(\mathcal{I}_{L_P}(2))=4$.
By the upper semi-continuity of the function $h(P)=h^l(\mathcal{I}_{L_P}(k))$ (for each $k,l$ discussed in \cite{BG}) we deduce that these equalities holds for a generic $P\in U$.
We deduce also that $H^i(\mathcal{I}_{L_P|\mathbb{P}^5}(4-i))=0$ for $i\geq 1$ thus by the Mumford criterium $L_P$ is $4$-regular and in particular generated by quartics for generic $P$. It follows also that the Hilbert function is constant on $U$ and we can use the semi-continuity of Betti numbers from \cite{BG}.
 We find that for a special choice of $P$ the surface $L_P$ have $7$ cubic and $7$ quartic generators.
We deduce that generically $I_{L_P}$ is generated by $7$ cubics and at most $7$ quartics.

Let us now show that $L_P$ needs at least $7$ quartic generators for generic $P$.
We use Singular to compute for a fixed $P_0$ that the scheme-theoretic intersection of cubics from
$I_{L_{P_0}}$ defines the reduced sum of the surface $L_{P_0}$ with seven $4$-secant lines that we denote by $S_{P_0}$.
We find also that in this case $h^1(\mathcal{I}_{S_{P_0}}(4))=0$ and $h^0(\mathcal{I}_{S_{P_0}}(4))=38$.
On the other hand from the sequence (\ref{q2}) we have that $h^0(\mathcal{I}_{L_{P_0}}(4))=45$ is the maximal possible value (i.e.~this is a generic value when $P_0$ is a variable).
In order to use semi-continuity arguments we consider the family
$$\mathcal{B}\supset S \rightarrow U \subset G(3,9),$$
where $\mathcal{B}$ is the tautological $\mathbb{P}^5$ bundle over the
subset $U\subset G(3,9)$ and $S\subset \mathcal{B}$ a subset such that $S_P\subset \mathbb{P}^5$ is
the fiber over $P$.
\begin{lemm}\label{lem3}The family $S\rightarrow U$ is flat.\end{lemm}
\begin{proof}
Let $Q_P$ be the union of the quadrisecant lines and let $Q\rightarrow U$
be the natural family. We shall show that the families $Q$ and $Q\cap L$
with fiber $Q_P\cap L_P$ are flat over an open subset. Consider the
Hilbert scheme $Hilb^4(L_P)\simeq Hilb^4(D_8)$ and the scheme
$Al^4(\mathbb{P}^5)$ of aligned points on $ \mathbb{P}^5$. It is
proved in \cite{LB} that for generic $P\in G(3,9)$ the intersection
$Al^4(\mathbb{P}^5)\cap Hilb^4(L_P) \subset Hilb^4(\mathbb{P}^5)$
has degree $7$ as computed as at the begin of the proof. Moreover, the family
$$ \mathcal{L}\rightarrow U,$$ where $\mathcal{L}$ is the
natural family obtained from the family $\mathcal{B} \supset L
\rightarrow U$ by taking $Hilb_c^4(.)$ (see \cite{LB}) of the
fibers, is smooth, so in particular flat. Consider the natural
smooth fiber bundle of Hilbert schemes of $4$ points in a fiber
$\mathcal{H}\rightarrow U$ obtained from $\mathcal{B}\rightarrow
U$ and its subbundle $\mathcal{A}\rightarrow U$ such that
$\mathcal{A}_P$ is equal to $Al^4(\mathbb{P}^5_P)$.

We have a natural embedding $f\colon\mathcal{A}\rightarrow
\mathcal{H}$. Consider the pull-back $f^*(L)$ of the flat family
$\mathcal{L}\rightarrow U$. From \cite[III Prop.~9.1A]{Ha} the
family $p\colon f^*(L)\rightarrow U $ is flat (this is exactly the
family $Q\cap L$). It remains to remark that in our chosen example
the fiber of $p$ is smooth. There exists an open $V\subset U$ such
that $p^{-1}(V)\rightarrow V$ is smooth, thus flat.

We conclude that $Q\cap L$ is a flat family. Moreover, using the
natural morphism $Al^4(\mathbb{P}^5)\rightarrow G(1,5)$ we deduce
that $Q$ is a flat family.

Since $L\rightarrow V$ is a flat family, we deduce
from the exact sequence
$$0\rightarrow\mathcal{I}_{Q\cap L} \rightarrow
\mathcal{O}_{L}\rightarrow \mathcal{O}_{Q\cap L}\rightarrow 0$$
that $\mathcal{I}_{Q\cap L}$ is a flat $\mathcal{O}_{V}$-module.
From the exact sequence $$0\rightarrow\mathcal{I}_{Q} \rightarrow
\mathcal{O}_{S}\rightarrow \mathcal{O}_{Q}\rightarrow 0,$$ we
deduce that $S\rightarrow V$ is flat.\end{proof}

We deduce that $h(P)=h^0(\mathcal{I}_{S_{P}}(4))$ is upper semi-continuous.
Since the cubics from the ideal of $L_P$ vanish on $S_P$ we deduce that there are at least $45-38=7$ quartic generators.

Let us prove the second part by concentrating on the case of the surface $L_P$. We computed with Singular that there exists a $P_0$ such that the ideal of $S_{P_0}$ is generated by the seven cubics from the ideal of $L_{P_0}$.
Since for a generic $P$ the surface $L_P$ have seven cubic generators that vanish on $S_P$ our result follows if we can use the semi-continuity results about the Betti numbers of $S_P$. To apply \cite{BG} we have to show that the Hilbert function $h(P)=h^0(\mathcal{O}_{\mathbb{P}^5}(n))-h^0(\mathcal{I}_{S_P}(n))$ is maximal for $P_0$.

First we saw that the number $h^0(\mathcal{I}_{S_{P_0}}(4))=38$ is minimal.
We compute also that $S_{P_0}$ is $5$-regular so by semi-continuity and the Mumford criterium $S_P$ is $5$-regular for a generic $P$.
Since the Hilbert function is equal to the Hilbert polynomial for $n\geq reg(S_P)$ (see \cite[Thm.~4.2]{E}) the proof is complete.
\end{proof}
\begin{rem} It can be shown that the Betti numbers of projections of del Pezzo surfaces to $\mathbb{P}^5$ change when we choose a special center of the smooth projection.
\end{rem}
\begin{prop}\label{k0} The surfaces $L_P$ and $\tilde{\tilde{D_7}}$ are embedded into nodal Calabi--Yau threefolds in
$\mathbb{P}^5$ which are complete intersections $X_1$ and $X_2$ of
two cubic with $52$ (resp.~$44$) nodes. The Picard group of small resolutions of $X_1$ and $X_2$ has rank $2$ and the Calabi--Yau threefold obtained after
the smoothing of the primitive contractions of $X_1$ and $X_2$ have Hodge numbers $(1,31)$ and $(1,38)$ respectively; we obtain the same invariants as the corresponding Tonoli examples.\end{prop}
\begin{proof} The complete intersections are nodal by Lemmas \ref{lem nodal} and \ref{es}.
To apply the theorem \cite[Thm.~2]{RS} we use Singular.
Let us concentrate on the case of $L_P$. First we compute that the intersection of five generic elements of the system $H^0(\mathcal{I}_{L_P}(3))$ cuts along $L_P$ and a finite number of points.
 This shows that the system $H^0(\mathcal{I}_{L_P}(3))$ is big. Then we compute the dimension of the locus where the matrix of partial derivatives of the map have smaller rank and find that the exceptional set in $\mathbb{P}^5$ of the map given by the seven cubics is of codimension $3$
\end{proof}
\begin{rem} Note that the intersection of three generic quadrics
containing $\tilde{D_7}$ (resp.~$\tilde{\tilde{D_8}}$) is a nodal
Fano threefold with $16$ (resp.~$20$) nodes.
\end{rem}
\begin{rem}\label{re}
It is an open problem to find the mirror families of the obtained
Calabi--Yau threefolds. The strategy would be to find a weak
Landau--Ginzburg model following the Batyrev approach: i.e to embed
the given Calabi--Yau threefold as a complete intersection in a Fano
manifold, then to degenerate the Fano to a toric $T$ with terminal
Gorenstein singularities, and finally find the appropriate Laurant
polynomial using the generators of the fans of $T$ (see \cite{Pr}).
\end{rem}
\section{Appendix}
Recall that the ideal of the del Pezzo surface $D_7$ can be
described by the $2$ by $2$ minors of a partially symmetric $4$ by
$5$ matrix. The following script gives the ideal of a projection of
$D_7$ into $\mathbb{P}^6(x,y,z,t,u,v,w)$. Then compute the number of nodes on a generic complete intersection of a cubics and two quadrics containing $\tilde{D_7}$ with a method that is much faster then using the Jacobian matrix.
\\ \\
\emph{
ring r1=101,(x,y,z,t,u,v,w),dp;\\
ring r=101,(x,y,z,t,u,v,w,p),dp;\\
LIB"random.lib";\\
matrix A2=randommat(5,5,maxideal(1),7);\\
matrix A1=A2+transpose(A2);\\
matrix A=submat(A1,2..4,1..4);\\
ideal jj=minor(A,2);\\
ideal j=eliminate(jj,p);\\
map f=r1,x,y,z,t,u,v,w;
setring r1;\\
ideal i=preimage(r,f,j);\\
ideal h=intersect(i,maxideal(2));\\
ideal ha=intersect(i,maxideal(3));\\
matrix C=randommat(1,1,ha,7);\\
ideal k=h[3],h[4],C[1,1],h[6];\\
ideal s=quotient(k,i);\\
ideal d=h[3],h[4],C[1,1],s[6];\\
ideal e=quotient(d,s);\\
ideal a=e,i;\\
degree(std(a));\\
ideal ra=radical(a);\\
degree(std(ra));\\
}\\
Recall that the ideal of the del Pezzo surface $D_8$ can be described by the $2$ by
$2$ minors of a symmetric $4$ by $4$ matrix. The following script gives the ideal of a
projection $L_P$ of $D_8$ into $\mathbb{P}^5(x, y, z, t, u, v)$.\\ \\
\emph{ring r=0,(x,y,z,t,u,v,w,p,q),dp;\\
LIB"random.lib";\\
matrix A2=randommat(4,4,maxideal(1),7);\\
matrix A1=A2+transpose(A2);\\
ideal jj=minor(A1,2);\\
ideal j=eliminate(jj,wpq);\\
minbase(j);}
\section{Appendix 1}\label{A1}

We present a list of all Calabi-Yau threefolds with Picard group of
rank $1$ known to the author. In the references column we show the
place where we can find more information about this Calabi--Yau
threefold. We put $?$ when the Calabi--Yau threefold is only
conjectured to exist. Moreover $B$ denotes an appropriated Fano
threefold.
\begin{longtable}
 {|c|c|c|c|c|c|c|c|}\hline
 $H^3$&$h^{1,1}$&$h^{1,2}$&$\chi$&$c_2\cdot H$&$dim|H|$&Description&Reference         \\\hline\hline       \endhead
            1 &1&61&-120&22 & 2&   $X_{6,6}\subset{P}(1,1,2,2,3,3)$                       &\cite{KT}   \\ \hline
            1 &1&145&-288&34 & 3&   $X_{10}\subset \mathbb{P}(1,1,1,2,5)$                  &            \\ \hline
            2 &1& 23&-44?&20 & 2&       ?                                                  &\cite{EV}   \\ \hline
            2 &1& 79&-156&32 & 3&   $X_{4,6}\subset\mathbb{P}(1,1,1,2,2,3)$                &\cite{KT}   \\ \hline
            2 &1&149&-296&44 & 4&   $X_8\subset\mathbb{P}(1,1,1,1,4)$                      &            \\ \hline
            3 &1&103&-204&42 & 4&   $X_6\subset\mathbb{P}(1,1,1,1,2)$                      &            \\ \hline
            4 &1& 73&-144&40 & 4&   $X_{4,4}\subset\mathbb{P}(1,1,1,1,2,2)$                &\cite{KT}   \\ \hline
            4 &1&129&-256&52 & 5&   $X_{2,6}\subset\mathbb{P}(1,1,1,1,1,3)$                &\cite{KT}   \\ \hline
            5 &1& 51&-100&38 & 4&      ?                                                   &\cite{EV}   \\ \hline
            5 &1&101&-200&50 & 5&   $X_5\subset\mathbb{P}^4$                               &\cite{COGP} \\ \hline
            5 &1&156&-310&62 & 6&         ?                                                &\cite{EV}   \\ \hline
            6 &1& 79&-156&48 & 5&   $X_{3,4}\subset \mathbb{P}(1,1,1,1,1,2)$               &\cite{KT}   \\ \hline 
            6 &1& 37&-72 &36 & 4&           ?                                              &\cite{EV}   \\ \hline
            7 &1& 61&-120&46 & 5&           ?                                              &\cite{EV}   \\ \hline
            7 &1& 79&-156&58 & 6&                                                          &\cite{K}   \\ \hline
            8 &1&  5&-8  &32 & 4&          ?                                               &\cite{EV}   \\ \hline
            8 &1& 89&-176&56 & 6&   $X_{2,4}\subset\mathbb{P}^5$                           &\cite{LT}   \\ \hline
            9 &1& 73&-144&54 & 6&   $X_{3,3}\subset\mathbb{P}^5$                           &\cite{LT}   \\ \hline
            10&1& 26&-50 &40 & 5&          ?                                               &\cite{EV}   \\ \hline
            10&1& 10&-32 &40 & 5&          ?                                               &\cite{EV}   \\ \hline
            10&1& 59&-116&52 & 6&           ?                                              &\cite{EV}   \\ \hline
            10&1& 59&-116&62 & 7&                                                          &\cite{K}   \\ \hline
            12&1& 10&-32 &36 & 6&          ?                                               &\cite{EV}   \\ \hline
            12&1& 31&-60 &48 & 6&           ?                                              &\cite{EV}   \\ \hline
            12&1& 35&-68 &48 & 6&   $X \xrightarrow{2:1}B$                                 &\cite{Lee1} \\ \hline
            12&1&61&-120&84 & 9&                                                          &\cite{K}   \\ \hline
            12&1& 73&-144&60 & 7&   $X_{2,2,3}\subset\mathbb{P}^6$                         &\cite{LT}   \\ \hline
            13&1& 61&-120&58 & 7&   $5\times 5 \ \mbox{Pffafian}\subset\mathbb{P}^6$       &\cite{T}\\ \hline
            13&1& 52&-102&82 & 9&                                                          &\cite{K}   \\ \hline
            14&1& 43&-84 &80 & 9&                                                          &\cite{K}   \\ \hline
            14&1& 44&-86 &80 & 9&                                                          &\cite{K}   \\ \hline
            14&1& 50&-98 &56 & 7&   $7\times 7 \ \mbox{Pffafian}\subset\mathbb{P}^6$       &\cite{Rodland} Table \ref{table1} \\ \hline
            14&1& 49&-96 &56 & 7&   $X \xrightarrow{2:1}B$                                 &\cite{Lee1} \\ \hline
            14&1& 51&-100&56 & 7&   $X \xrightarrow{2:1}B$                                 &\cite{Lee1} \\ \hline
            14&1& 51&-100&56 & 7&            ?                                             &\cite{EV}   \\ \hline
            14&1& 61&-120&68 & 8&                                                          &\cite{K}   \\ \hline
            15&1& 35&-68 &78 & 9&                                                          &\cite{K}   \\ \hline
            15&1& 39&-76 &54 & 7&    $X \xrightarrow{2:1}B$                                &\cite{Lee1} \\ \hline
            15&1& 39&-76 &54 & 7&                                                          &Table \ref{table1}   \\ \hline
            15&1& 40&-78 &54 & 7&                                                          &Table \ref{table1}   \\ \hline
            15&1& 40&-78 &54 & 7&   $To_{15}\subset\mathbb{P}^6$                           &\cite{T} \\ \hline
            15&1& 43&-84 &54 & 7&   $X \xrightarrow{2:1}B$                                 &\cite{Lee1} \\ \hline
            15&1& 76&-150&66 & 8&   $X_{1,1,3}\subset G(2,5)$                              &\cite{BCKS} \\ \hline
            16&1& 31&-60 &52 & 7&                                                          &Table \ref{table1}  \\ \hline
            16&1& 31&-60 &52 & 7&      $To_{16}\subset\mathbb{P}^6$                        &\cite{T}\\ \hline
            16&1& 37&-72 &52 & 7&   $X \xrightarrow{2:1}B$                                 &\cite{Lee1} \\ \hline
            16&1& 65&-128&64 & 8&   $X_{2,2,2,2}\subset\mathbb{P}^7$                       &\cite{LT}   \\ \hline
 $\frac{16}{n^3}$&1&65&-128 &  & 8&                                                       &\cite{K}   \\ \hline
            17&1& 23&-44& 50 & 7&                                                          &Table \ref{table1}   \\ \hline
            17&1& 23&-44 &50 & 7&   $To_{17}\subset\mathbb{P}^6$                           &\cite{T}\\ \hline
            17&1& 55&-108&62 & 8&                                                          &\cite{K}   \\ \hline
            17&1& 55&-108&62 & 8&   $X \xrightarrow{2:1}B$                                 &\cite{Lee1} \\ \hline
            17&1& 33&-64 &50 & 7&   $X \xrightarrow{2:1}B$                                 &\cite{Lee1} \\ \hline
            18&1& 45&-88 &60 & 8&                                                          &\cite{K}   \\ \hline
            18&1& 45&-88 &60 & 8&              ?                                           &\cite{EV}   \\ \hline
            18&1& 43&-84 &60 & 8&  $X \xrightarrow{2:1}B$                                  &\cite{Lee1} \\ \hline
            18&1& 46&-90 &60 & 8&                                                          &\cite{K}   \\ \hline
            18&1& 47&-92 &60 & 8&   $X \xrightarrow{2:1}B$                                 &\cite{Lee1} \\ \hline
       $18n^3$&1& 54&-106&   &  &                                                          &\cite{K}   \\ \hline
            19&1& 38&-74 &58 & 8&                                                          &Table \ref{table1}   \\ \hline
            19&1& 39&-76 &58 & 8&   $X \xrightarrow{2:1}B$                                 &\cite{Lee1} \\ \hline
            20&1& 31&-60 &56 & 8&                                                          &Table \ref{table1} \\ \hline
            20&1& 61&-120&68 & 9&                                                          &\cite{K}   \\ \hline
            20&1& 61&-120&68 & 9&   $X_{1,2,2}\subset G(2,5)$                              &\cite{BCKS} \\ \hline
            21&1& 52&-102&66 & 9&               ?                                          &\cite{EV}   \\ \hline
            21&1& 51&-100&66 & 9&   $X \xrightarrow{2:1}B$                                 &\cite{Lee1} \\ \hline
            21&1& 53&-104&66 & 9&   $X \xrightarrow{2:1}B$                                 &\cite{Lee1} \\ \hline
            21&1& 51&-100&66 & 9&               ?                                          &\cite{EV}   \\ \hline
            22&1& 47&-92 &64 & 9&   $X \xrightarrow{2:1}B$                                 &\cite{Lee1} \\ \hline
            24&1& 26&-50 &60 & 9&                                                          &Table \ref{table1}   \\ \hline
            24&1& 59&-116&72 &10&  $X_{1,1,1,1,1,1,2}\subset X_{10}$                       &\cite{EV}            \\ \hline
            25&1& 51&-100&70 &10&                                                          &\cite{K}   \\ \hline
            25&1& 51&-100&70 &10&            ?                                             &\cite{EV}   \\ \hline
            25&1& 51&-100&70 &10&  $X \xrightarrow{2:1}B$                                  &\cite{Lee1} \\ \hline
            28&1& 59&-116&76 &11&  $X_{1,1,1,1,2}\subset G(2,6)$                           &\cite{EV}            \\ \hline
            29&1& 51&-100&74 &11&             ?                                            &\cite{EV}   \\ \hline
            29&1& 53&-104&74 &11& $X \xrightarrow{2:1}B$                                   &\cite{Lee1} \\ \hline
            29&1& 49&-96 &74 &11&  $X \xrightarrow{2:1}B$                                  &\cite{Lee1} \\ \hline
            30&1& 49&-96 &72 &11&  $X \xrightarrow{2:1}B$                                  &\cite{Lee1} \\ \hline
            32&1& 59&-116&80 &12&  $X_{1,1,2}\subset LG(3,6)$                              &\cite{EV}            \\ \hline
            32&1& 59&-116&80 &12&                                                          &\cite{BK}   \\ \hline
            33&1& 52&-102&78 &12&             ?                                            &\cite{EV}   \\ \hline
            34&1& 45&-88 &76 &12&              ?                                           &\cite{EV}   \\ \hline
            34&1& 49&-96 &76 &12&                                                          &\cite{K}   \\ \hline
            34&1& 50&-98 &76 &12&                                                          &\cite{K}   \\ \hline
            35&1&26&-50&?&11&$3\times 3$ minors of $5\times 5$ sym. mat.&\cite{KK1}\\ \hline
            36&1& 37&-72 &72 &12&              ?                                           &\cite{EV}   \\ \hline
            36&1& 61&-120&84 &13&  $X_{1,2}\subset X_5$                                    &\cite{EV}            \\ \hline
            42&1& 50&-98 &84 &14&  $X_{1,1,1,1,1,1,1}\subset G(2,7)$                       &\cite{BCKS} \\ \hline
            42&1& 49&-96 &84 &14&  $X_{1,1,1,1,1,1}\subset G(3,6)$                         &\cite{BCKS} \\ \hline
            44&1& 65&-128&92 &15&  $X_{2,1}\xrightarrow{2:1}A_{2,2}$                       &\cite{EV}            \\ \hline
            47&1& 46&-90 &86 &15&              ?                                           &\cite{EV}   \\ \hline
            48&1& 79&-156&96 &16&                                                          &\cite{BK}   \\ \hline
            56&1& 47&-92 &92 &17&  $X_{1,1,1,1}\subset F_1(Q_5)$                           &\cite{EV}            \\ \hline
            57&1& 43&-84 &90 &17&  Tj                                                      &\cite{EV}            \\ \hline
            74&1& 29&-56 &92 &20&                                                          &\cite{BK}   \\ \hline
            78&1& 31&-60 &96 &21&                                                          &\cite{BK}   \\ \hline
            78&1& 33&-64 &96 &21&                                                          &\cite{BK}   \\ \hline
            79&1& 25&-48 &94 &21&                                                          &\cite{BK}   \\ \hline
            80&1&101&-200&128&24&                                                          &\cite{BK}   \\ \hline
            82&1& 36&-70 &100&22&                                                          &\cite{BK}   \\ \hline
            83&1& 31&-60 &98 &22&                                                          &\cite{BK}   \\ \hline
            83&1& 32&-62 &98 &22&                                                          &\cite{BK}   \\ \hline
            86&1& 41&-80 &104&23&                                                          &\cite{BK}   \\ \hline
            87&1& 35&-68 &102&23&                                                          &\cite{BK}   \\ \hline
            88&1& 29&-56 &100&23&                                                          &\cite{BK}   \\ \hline
            91&1& 40&-78 &106&24&                                                          &\cite{BK}   \\ \hline
            92&1& 35&-68 &104&24&                                                          &\cite{BK}   \\ \hline
            92&1& 36&-70 &104&24&                                                          &\cite{BK}   \\ \hline
            93&1& 29&-56 &102&24&                                                          &\cite{BK}   \\ \hline
            96&1& 39&-76 &108&25&                                                          &\cite{BK}   \\ \hline
            97&1& 33&-64 &106&25&                                                          &\cite{BK}   \\ \hline
            97&1& 34&-66 &106&25&                                                          &\cite{BK}   \\ \hline
            97&1& 35&-68 &106&25&                                                          &\cite{BK}   \\ \hline
            98&1& 29&-56 &104&25&                                                          &\cite{BK}   \\ \hline
            98&1& 30&-58 &104&25&                                                          &\cite{BK}   \\ \hline
            98&1& 31&-60 &104&25&                                                          &\cite{BK}   \\ \hline
            98&1& 32&-62 &104&25&                                                          &\cite{BK}   \\ \hline
            99&1& 28&-54 &102&25&                                                          &\cite{BK}   \\ \hline
           102&1& 34&-66 &108&26&                                                          &\cite{BK}   \\ \hline
           102&1& 35&-68 &108&26&                                                          &\cite{BK}   \\ \hline
           102&1& 38&-74 &108&26&                                                          &\cite{BK}   \\ \hline
           103&1& 30&-58 &106&26&                                                          &\cite{BK}   \\ \hline
           103&1& 31&-60 &106&26&                                                          &\cite{BK}   \\ \hline
           104&1& 28&-54 &104&26&                                                          &\cite{BK}   \\ \hline
           107&1& 36&-70 &110&27&                                                          &\cite{BK}   \\ \hline
           108&1& 30&-58 &108&27&                                                          &\cite{BK}   \\ \hline
           108&1& 31&-60 &108&27&                                                          &\cite{BK}   \\ \hline
           108&1& 32&-62 &108&27&                                                          &\cite{BK}   \\ \hline
           108&1& 33&-64 &108&27&                                                          &\cite{BK}   \\ \hline
           108&1&129&-256&156&31&                                                          &\cite{BK}   \\ \hline
           112&1& 35&-68 &112&28&                                                          &\cite{BK}   \\ \hline
           113&1& 32&-62 &110&28&                                                          &\cite{BK}   \\ \hline
           116&1& 41&-80 &116&29&                                                          &\cite{BK}   \\ \hline
           117&1& 37&-72 &114&29&                                                          &\cite{BK}   \\ \hline
           118&1& 31&-60 &112&29&                                                          &\cite{BK}   \\ \hline
           118&1& 32&-62 &112&29&                                                          &\cite{BK}   \\ \hline
           123&1& 34&-66 &114&30&                                                          &\cite{BK}   \\ \hline
           124&1& 31&-60 &112&30&                                                          &\cite{BK}   \\ \hline
           136&1& 55&-108&124&33&                                                          &\cite{BK}   \\ \hline
           144&1& 45&-88 &120&34&                                                          &\cite{BK}   \\ \hline
           144&1& 47&-92 &120&34&                                                          &\cite{BK}   \\ \hline
           152&1& 40&-76 &116&35&                                                          &\cite{BK}   \\ \hline
           168&1& 51&-100&132&39&                                                          &\cite{BK}   \\ \hline
           168&1& 53&-104&132&39&                                                          &\cite{BK}   \\ \hline
           176&1& 47&-92 &128&40&                                                          &\cite{BK}   \\ \hline
           200&1& 51&-100&140&45&                                                          &\cite{BK}   \\ \hline
           232&1& 53&-104&148&51&                                                          &\cite{BK}   \\ \hline
           432&1& 79&-156&192&88&                                                          &\cite{BK}   \\ \hline
           648&1&103&-204&252&129&                                                         &\cite{BK}   \\ \hline
?&1&1&?&?&?&    not simply connected &\cite{D}\\ \hline ?&1&4&?&?&?&
not simply connected &\cite{D}\\ \hline

\end{longtable}

\vskip10pt Department of Mathematics and Informatics,\\ Jagiellonian
University, {\L}ojasiewicza 6, 30-348 Krak\'{o}w, Poland.\\
 \emph{E-mail address:} grzegorz.kapustka@uj.edu.pl
\end{document}